\theoremstyle{plain} 
\newtheorem{defn}{Definition}[section]
\newtheorem*{defn*}{Definition}
\newtheorem*{lem*}{Lemma}
\newtheorem{lem}{Lemma}[section]
\newtheorem{thm}{Theorem}[section]
\newtheorem*{thm*}{Theorem}
\newtheorem{cor}{Corollary}[section]
\newtheorem*{cor*}{Corollary}
\newtheorem{pro}{Property}[section]
\newtheorem*{pro*}{Property}
\newtheorem{prop}{Proposition}[section]
\newtheorem*{prop*}{Proposition}
\newcommand{\xth}{$x^{\text{th }}$}
\newcommand\Lcm{\operatorname{lcm}}
\title{Generalizing the Abundancy of an Integer}
\author{David C. Luo \thanks{
                \texttt david.luo@emory.edu}
  \\ \\ Department of Mathematics, Emory University}
\begin{document} 
\date{}
\maketitle

\begin{abstract}
The abundancy index of a positive integer is the ratio between the sum of its divisors and itself. We generalize previous results on abundancy indices by defining a two-variable abundancy index function as $I(x,n)\colon\mathbb{Z^+}\times\mathbb{Z^+}\to\mathbb{Q}$ where $I(x,n)=\frac{\sigma_x(n)}{n^x}$. Specifically, we extend limiting properties of the abundancy index and construct sufficient conditions for rationals greater than one that fail to be in the image of the function $I(x,n)$. 
\end{abstract}

\section{Introduction}
The concept of \textit{perfect} numbers is one of the oldest mysteries in number theory and has been a major topic of study for over two millennia. Throughout the ages, perfect numbers have been perceived to possess superstitious properties \cite{Voight}. For example, the Pythagoreans related the perfect number six to marriage, health, and beauty \cite{Voight}. On the other hand, early Hebrews distinguished six as a ``truly" perfect number as they believed that God created the Earth in six days \cite{Voight}. Although perfect numbers are important in ancient belief systems and superstitions, they also play a prominent role in mathematical theory. As Nicomachus pointed out, perfect numbers create a balance between deficient (numbers whose proper divisors sum to less than the number itself) and abundant (numbers whose proper divisors sum to greater than the number itself) numbers \cite{Voight}. A noteworthy result proven by Euler characterizes even perfect numbers in a specific form \cite{Voight}. 
\begin{defn}
A positive integer $N$ is perfect if and only if $N$ is equal to the sum of its proper divisors. 
\end{defn}
\begin{thm}\label{a2}
(Euler) Even perfect numbers are of the form $N =2^{p-1}(2^p -1)$, where $p$ and $(2^p -1)$ are primes.
\end{thm}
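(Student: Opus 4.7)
The plan is to follow the classical Euler argument: take an even perfect number $N$, factor off all the powers of $2$, and exploit multiplicativity of $\sigma$ together with the perfection equation $\sigma(N)=2N$ to force the odd part to be a Mersenne prime.

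First I would write $N=2^{k}m$ with $k\geq 1$ and $m$ odd, so that $\gcd(2^{k},m)=1$ and hence $\sigma(N)=\sigma(2^{k})\sigma(m)=(2^{k+1}-1)\sigma(m)$. Equating this with $2N=2^{k+1}m$ gives the key identity
\[
(2^{k+1}-1)\,\sigma(m)=2^{k+1}m.
\]
Since $2^{k+1}-1$ is odd, it must divide $m$, so I can write $m=(2^{k+1}-1)c$ for some positive integer $c$, and substituting back yields $\sigma(m)=2^{k+1}c=m+c$.

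The crux of the proof is to read $\sigma(m)=m+c$ correctly. Both $m$ and $c$ are divisors of $m$ (with $c<m$ because $2^{k+1}-1\geq 3$ when $k\geq 1$), and their sum already equals $\sigma(m)$; therefore $m$ has exactly the two divisors $m$ and $c$. This forces $c=1$ and $m$ prime, so $m=2^{k+1}-1$ is a Mersenne prime and $N=2^{k}(2^{k+1}-1)$.

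Finally, I would record the elementary fact that $2^{k+1}-1$ being prime compels $k+1$ itself to be prime: if $k+1=ab$ with $1<a,b<k+1$, then the factorization $2^{ab}-1=(2^{a}-1)(2^{a(b-1)}+2^{a(b-2)}+\cdots+1)$ exhibits a nontrivial divisor. Setting $p=k+1$ then delivers the stated form $N=2^{p-1}(2^{p}-1)$ with both $p$ and $2^{p}-1$ prime. I expect the only mildly delicate step to be the ``exactly two divisors'' deduction from $\sigma(m)=m+c$, which requires ruling out $c=m$; this is where the hypothesis $k\geq 1$ (hence $2^{k+1}-1\geq 3$) is used in an essential way.
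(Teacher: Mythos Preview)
Your argument is correct and is precisely the classical Euler proof. However, the paper does not actually prove this theorem: it is stated in the Introduction as a known result attributed to Euler, with a citation to \cite{Voight}, and no proof is given anywhere in the paper. So there is nothing to compare against on the paper's side.

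For what it is worth, your write-up is clean. The only place worth a remark is the sentence ``therefore $m$ has exactly the two divisors $m$ and $c$'': you might make explicit that every divisor of $m$ appears in the sum $\sigma(m)$, so if $\sigma(m)=m+c$ with $c$ and $m$ distinct divisors of $m$, no further divisor can be present. You do address the distinctness (via $k\geq 1$) carefully, which is the subtle point.
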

Open problems regarding perfect numbers include the questionable existence of an odd perfect number and infinitely many perfect numbers. For further study of perfect numbers, the \textit{abundancy index} of a positive integer was introduced as the ratio between the sum of its divisors and itself. 
\begin{defn}
The \textit{abundancy index} of a positive integer $n$, $I(n)$, is defined by 
$$
I(n)\colon \mathbb{Z^+}\to\mathbb{Q}
$$
where
$$
 I(n)=\frac{\sigma(n)}{n}.
$$
\end{defn}
In particular, we have that a positive integer is perfect if and only if it has an abundancy index of two. By studying the abundancy index, we gain extended properties of perfect numbers. The following theorem lists criteria for finding an odd perfect number \cite{Holdener}. 
\begin{thm}\label{a0}
There exists an odd perfect number if and only if there exist positive integers $p, n,$ and $\alpha$ such that $p \equiv \alpha \equiv 1 \left(\text{mod  }4\right)$, where $p$ is a prime not dividing $n$, and
\begin{align*}
I(n) = \frac{2p^{\alpha}(p-1)}{p^{\alpha+1}-1}.
\end{align*}
\end{thm}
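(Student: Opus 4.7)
The plan is to prove the biconditional by treating each implication separately, leaning on Euler's structure theorem for odd perfect numbers in one direction and on Theorem \ref{a2} in the other.

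For the forward implication I would assume $N$ is an odd perfect number and invoke Euler's classical theorem on odd perfect numbers, which supplies a factorization $N = p^{\alpha} m^{2}$ with $p$ a prime, $\gcd(p,m) = 1$, and $p \equiv \alpha \equiv 1 \pmod{4}$. Setting $n = m^{2}$, the identity $\sigma(N) = 2N$ together with the multiplicativity of $\sigma$ and the closed form $\sigma(p^{\alpha}) = (p^{\alpha+1} - 1)/(p-1)$ rearranges directly into the claimed expression for $I(n)$. This half is essentially bookkeeping once Euler's theorem is in hand.

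For the reverse implication I would assume the hypotheses on $p, n, \alpha$ and set $N := p^{\alpha} n$. Because $\gcd(p,n) = 1$, multiplicativity of $\sigma$ lets me compute $\sigma(N) = \sigma(p^{\alpha})\sigma(n)$, and the factor $(p^{\alpha+1}-1)/(p-1)$ in $\sigma(p^{\alpha})$ cancels against its reciprocal in the hypothesized formula for $I(n)$, yielding $\sigma(N) = 2N$. Thus $N$ is perfect; the only remaining task is to verify that $N$ is odd.

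The main obstacle is this oddness check, since the theorem's hypotheses do not assume $n$ is odd. My plan is to argue by contradiction: if $N$ were even, Theorem \ref{a2} would force $N = 2^{q-1}(2^{q}-1)$ with $q$ and $2^{q}-1$ both prime. Since $p$ is odd (from $p \equiv 1 \pmod 4$) and $p^{\alpha} \mid N$ with $\gcd(p,n)=1$, the prime $p$ must coincide with the unique odd prime factor $2^{q}-1$, whence $\alpha = 1$. But for any prime $q \geq 2$ one has $2^{q} \equiv 0 \pmod{4}$, so $p = 2^{q} - 1 \equiv 3 \pmod{4}$, contradicting $p \equiv 1 \pmod{4}$. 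This short Mersenne-prime residue computation is the crux; once it is in place, $N$ must be odd, which completes the proof.
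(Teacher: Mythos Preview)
The paper does not supply its own proof of Theorem~\ref{a0}; the result is quoted from \cite{Holdener} as motivation, so there is no in-paper argument to compare against. On its own merits your proposal is correct: the forward direction is the standard unwinding of Euler's form $N=p^{\alpha}m^{2}$ for odd perfect numbers, and in the converse your parity check is sound---once $N=p^{\alpha}n$ is shown perfect, an appeal to Theorem~\ref{a2} together with the observation that every Mersenne prime $2^{q}-1$ satisfies $2^{q}-1\equiv 3\pmod 4$ (since $q\geq 2$ forces $4\mid 2^{q}$) contradicts $p\equiv 1\pmod 4$ and rules out the even case.
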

Theorem \ref{a0} asserts that if we can find a positive integer $n$ with an abundancy index of $\frac{13}{7}$ such that $13$ does not divide $n$, then we know that an odd perfect number exists. A question one might ask is whether or not \textit{some} positive integer meets these requirements. To answer this, we categorize rationals greater than one that fail to be the abundancy index of any positive integer. We call these rationals \textit{abundancy outlaws}. Much progress has been made in determining the status of rational numbers greater than one as abundancy outlaws or indices. One notable result generates a class of abundancy outlaws of the form $\frac{\sigma(n)-t}{n}$, where $t$ is a positive integer \cite{Holdener}.
\begin{thm} \label{1}
Let $m$ and $k$ be integers. If $\left(k, m\right)=1$ and $m<k<\sigma(m)$, then $\frac{k}{m}$ is an abundancy outlaw. 
\end{thm}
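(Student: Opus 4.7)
The plan is to argue by contradiction. Assume some positive integer $n$ satisfies $I(n) = k/m$, and I will derive a contradiction with the hypothesis $k < \sigma(m)$. First I would use $(k,m) = 1$ together with the resulting equation $m\sigma(n) = kn$ to conclude $m \mid n$, and write $n = ms$ for some $s \in \mathbb{Z}^+$.

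Next I would invoke the identity
$$
I(n) = \frac{\sigma(n)}{n} = \sum_{d \mid n} \frac{1}{d},
$$
obtained by the substitution $d \mapsto n/d$ in the defining sum. The payoff is that $I$ is monotone under divisibility: if $a \mid b$, then every divisor of $a$ is a divisor of $b$, so $I(a) \le I(b)$, and the inequality is strict whenever $a$ is a proper divisor of $b$ (since $b$ itself contributes $1/b$ to $I(b)$ but not to $I(a)$).

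Applying this to $m \mid n = ms$ gives two cases. If $s = 1$, then $n = m$ and $I(m) = k/m$ reduces to $\sigma(m) = k$, contradicting $k < \sigma(m)$. If $s > 1$, then $m$ is a proper divisor of $n$, so $\sigma(m)/m = I(m) < I(n) = k/m$, which rearranges to $\sigma(m) < k$, again contradicting $k < \sigma(m)$. I do not anticipate a genuine obstacle: the substantive step is the divisor-sum reformulation of $I$, after which the argument reduces to a two-line case split. The hypothesis $m < k$ plays no role in the reasoning itself; it merely guarantees that $k/m > 1$, so that the conclusion is nonvacuous in the framework of abundancy outlaws.
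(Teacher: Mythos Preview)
Your proposal is correct and follows essentially the same route as the paper's proof of the generalization (Theorem~\ref{Theorem1}, specialized to $x=1$): assume $I(n)=k/m$, use $(k,m)=1$ to force $m\mid n$, invoke monotonicity of $I$ under divisibility to get $\sigma(m)/m \le k/m$, and contradict $k<\sigma(m)$. The only cosmetic differences are that you justify monotonicity via the reciprocal-sum identity $I(n)=\sum_{d\mid n}1/d$ whereas the paper proves it directly in Proposition~\ref{Proposition1}, and you make the case $n=m$ explicit while the paper's write-up glosses over it.
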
 
In 2007, Judy Holdener and William Stanton proved that under certain conditions, rationals of the form $\frac{\sigma(n)+t}{n}$ are also abundancy outlaws, where $t$ is a positive integer \cite{Holdener}. This theorem proves to be extremely useful as it extends Theorem \ref{1} and classifies abundancy outlaws in a similar form.
\begin{thm} \label{2}
For a positive integer t, let $\frac{\sigma(N)+t}{N}$ be a fraction in lowest terms, and let $N= \prod_{i=1}^{n} p_i^{k_i}$ for primes $p_1, p_2,\ldots, p_n$. If there exists a positive integer $j \leq n$ such that $p_j<\frac{1}{t}\sigma\left(\frac{N}{p_j^{k_j}}\right)$ and $\sigma(p_j^{k_j})$ has a divisor $D > 1$ such that at least one of the following is true: 
\begin{enumerate}
\item $I\left(p_j^{k_j}\right)I(D) > \frac{\sigma(N)+t}{N}$ and $gcd(D,t)=1$; and 
\item $gcd(D, Nt) = 1$,
\end{enumerate}
then $\frac{\sigma(N)+t}{N}$ is an abundancy outlaw. 
\end{thm}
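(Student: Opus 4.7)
The plan is to argue by contradiction. Suppose some positive integer $M$ satisfies $I(M) = \frac{\sigma(N)+t}{N}$. Since the right-hand side is in lowest terms, $N \mid M$. Write $M = p_j^{a} \cdot m$ with $p_j \nmid m$, and set $N' = N/p_j^{k_j}$; the divisibility $N \mid M$ forces $a \ge k_j$ and $N' \mid m$, so $m = N's$ for some positive integer $s$. Clearing denominators in the equation $\sigma(M)/M = (\sigma(N)+t)/N$ yields the master identity
\[
\sigma(p_j^{a})\,\sigma(N's) \;=\; p_j^{a-k_j}\,s\bigl(\sigma(p_j^{k_j})\,\sigma(N')+t\bigr).
\]

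The first step is to pin down $a = k_j$. Using the standard monotonicity $\sigma(N's) \ge s\,\sigma(N')$ (immediate from $N' \mid N's$, which gives $I(N's) \ge I(N')$), the master identity rearranges to
\[
\sigma(N')\,\bigl[\sigma(p_j^{a}) - p_j^{a-k_j}\,\sigma(p_j^{k_j})\bigr] \;\le\; p_j^{a-k_j}\,t.
\]
Since $\sigma(p_j^{a}) - p_j^{a-k_j}\,\sigma(p_j^{k_j}) = \sigma(p_j^{a-k_j-1}) \ge p_j^{a-k_j-1}$ whenever $a > k_j$, this forces $\sigma(N') \le p_j\,t$, contradicting the hypothesis $p_j\,t < \sigma(N')$. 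So $a = k_j$, and the master identity collapses to $\sigma(p_j^{k_j})\,\bigl[\sigma(N's) - s\,\sigma(N')\bigr] = s\,t$. In particular $D \mid \sigma(p_j^{k_j}) \mid s\,t$, and since $\gcd(D,t) = 1$ under either hypothesis, $D \mid s$, and hence $D \mid M$.

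The finish exploits $D \mid M$ together with the standard monotonicity $I(L) \le I(M)$ for $L \mid M$. Under hypothesis (1), the congruence $\sigma(p_j^{k_j}) \equiv 1 \pmod{p_j}$ forces $\gcd(p_j^{k_j},D) = 1$, so $p_j^{k_j} D \mid M$ with coprime factors and
\[
I(M) \;\ge\; I(p_j^{k_j})\,I(D) \;>\; \frac{\sigma(N)+t}{N} \;=\; I(M),
\]
a contradiction. Under hypothesis (2), $\gcd(D,N) = 1$ together with $N \mid M$ and $D \mid M$ gives $ND \mid M$ with $\gcd(N,D) = 1$, so $I(M) \ge I(N)\,I(D) \ge I(N) + \sigma(N)/(ND)$, using $\sigma(D) \ge D+1$; the fact that $D \mid \sigma(p_j^{k_j})$ then yields $\sigma(N)/(ND) \ge \sigma(N')/N > p_j\,t/N > t/N$, so $I(M) > (\sigma(N)+t)/N$, again a contradiction. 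I expect the main obstacle to be the exponent bookkeeping in the first step: pinning down $a = k_j$ requires carefully unwinding the polynomial identity $\sigma(p_j^{a}) - p_j^{a-k_j}\,\sigma(p_j^{k_j}) = \sigma(p_j^{a-k_j-1})$ and invoking the hypothesis $p_j\,t < \sigma(N')$ at precisely the right moment; once that is secured, the two cases reduce to standard divisibility-plus-monotonicity arguments.
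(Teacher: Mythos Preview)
Your argument is correct, and it follows the same contradiction skeleton as the paper's proof of the generalized version (Theorem~\ref{Theorem2} specialized to $x=1$): assume $I(M)=(\sigma(N)+t)/N$, force the $p_j$-exponent of $M$ to equal $k_j$, extract that $D$ divides the cofactor, and then split into the two cases. The execution, however, differs in two places worth noting.

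First, to pin down $a=k_j$ the paper invokes Lemma~\ref{Lemma2}, which packages the hypothesis $p_j<\tfrac{1}{t}\sigma(N/p_j^{k_j})$ as the inequality $(\sigma(N)+t)/N<I(p_jN)$ and then reads off $p_j^{k_j+1}\nmid M$ from monotonicity. You instead work directly from the cleared-denominator identity and the bound $\sigma(N's)\ge s\,\sigma(N')$, together with the polynomial identity $\sigma(p_j^{a})-p_j^{a-k_j}\sigma(p_j^{k_j})=\sigma(p_j^{a-k_j-1})$. This is a self-contained replacement for the lemma and arguably makes the role of the hypothesis $p_jt<\sigma(N')$ more transparent.

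Second, and more substantively, your treatment of Case~2 is cleaner than the paper's. The paper decomposes the cofactor as $m=m_0\prod p_i^{q_i}$ to separate out the primes shared with $N/p_j^{k_j}$, shows $d\mid m_0$, and then bounds $I(m_0)>1+t/\sigma(n)$ via a chain of inequalities. You bypass this decomposition entirely: once $D\mid s$ and $\gcd(D,N)=1$, you get $ND\mid M$ immediately and finish with the one-line estimate $I(M)\ge I(N)(1+1/D)=I(N)+\sigma(N)/(ND)\ge I(N)+\sigma(N')/N>I(N)+t/N$. This buys simplicity; the paper's route, on the other hand, isolates the coprime-to-$N$ part of the cofactor explicitly, which is perhaps more informative structurally but is not needed for the contradiction.
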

Additionally, Holdener and Stanton were also able to show that certain rationals $\frac{a}{b}$ greater than one falling within the range $I(n)<\frac{a}{b}<I(p_in)$ where $n$ is a positive integer and $p_i$ is a prime divisor of $n$ are abundancy outlaws \cite{Holdener}. 
\begin{thm} \label{12}
Let $\frac{r}{s}$ be a fraction in lowest terms such that there exists a divisor $N= \prod_{i=1}^{n} p_i^{k_i}$ of $s$ satisfying the following two conditions:
\begin{enumerate}
\item $\frac{r}{s} < I(p_iN)$ for all $i\leq n$
\item The product $\sigma(N)(\frac{s}{N})$ has a divisor $M$ such that $(M,r) =1$ and $I(M) \geq \frac{\sigma\left(p_j^{k_j+1}\right)}{\sigma\left(p_j^{k_j+1}\right)-1}$ for some positive integer $j\leq n$.
\end{enumerate}
Then $\frac{r}{s}$ is an abundancy outlaw. 
\end{thm}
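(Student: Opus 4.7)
The plan is to argue by contradiction. Suppose $\tfrac{r}{s}$ is an abundancy index, so there exists $m \in \mathbb{Z}^+$ with $\sigma(m)/m = r/s$. Since $(r,s) = 1$, we have $s \mid m$, and in particular $N \mid m$. The strategy is to use condition (1) to pin down the $p_i$-adic valuations of $m$ exactly, then use condition (2) to force the abundancy of $m$ up to a value that violates condition (1).

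For the first step I would use monotonicity of $I$ along divisibility: if $d \mid n$ then $I(d) \le I(n)$, since $\sigma(n)/n = \sum_{e \mid n} 1/e$. I claim $v_{p_i}(m) = k_i$ for every $i$. If $v_{p_i}(m) > k_i$, then $p_i^{k_i+1}$ and $N/p_i^{k_i}$ are coprime and both divide $m$, so $p_i N \mid m$, yielding $I(m) \ge I(p_i N)$ and contradicting condition (1). Hence I may write $m = N m''$ with $(m'', N) = 1$, and since $v_{p_i}(s) = k_i$ as well, also $s = N(s/N)$ with $(s/N,\,N) = 1$ and $(s/N) \mid m''$.

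Next I would exploit multiplicativity of $\sigma$: the identity $\sigma(m)/m = r/s$ rearranges to
\[
\sigma(N)\bigl(s/N\bigr)\sigma(m'') = r\, m''.
\]
Since $M \mid \sigma(N)(s/N)$ divides the left-hand side and $(M,r) = 1$, it follows that $M \mid m''$, so $I(M) \le I(m'')$. The centerpiece of the proof is the clean identity
\[
\frac{I(p_j N)}{I(N)} \;=\; \frac{\sigma(p_j^{k_j+1})}{p_j\,\sigma(p_j^{k_j})} \;=\; \frac{\sigma(p_j^{k_j+1})}{\sigma(p_j^{k_j+1}) - 1},
\]
using $p_j \sigma(p_j^{k_j}) = p_j + p_j^2 + \cdots + p_j^{k_j+1} = \sigma(p_j^{k_j+1}) - 1$. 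Plugging in the bound from condition (2) then gives
\[
I(m) \;=\; I(N)\,I(m'') \;\ge\; I(N)\,I(M) \;\ge\; I(N)\cdot \frac{\sigma(p_j^{k_j+1})}{\sigma(p_j^{k_j+1})-1} \;=\; I(p_j N),
\]
directly contradicting condition (1).

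The main obstacle is recognizing that the awkward-looking threshold $\sigma(p_j^{k_j+1})/(\sigma(p_j^{k_j+1})-1)$ in condition (2) is exactly the ratio $I(p_j N)/I(N)$; once that identification is made, the two hypotheses lock together and the contradiction is immediate. A secondary subtlety is justifying that $v_{p_i}(m)$ must equal $k_i$ rather than merely be $\ge k_i$, which requires the coprimality observation $(p_i^{k_i+1},\, N/p_i^{k_i}) = 1$ so that $p_i^{k_i+1} \mid m$ upgrades to $p_i N \mid m$.
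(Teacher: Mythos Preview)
Your proof is correct and follows essentially the same route as the paper's proof of the generalized version (Theorem~\ref{Theoremd}, which specializes to this statement at $x=1$): contradiction, $N\mid m$ via Property~\ref{Property1}, condition~(1) forcing $v_{p_i}(m)=k_i$ so that $m=Nm''$ with $(m'',N)=1$, the rearranged identity $\sigma(N)(s/N)\sigma(m'')=rm''$ forcing $M\mid m''$, and then the ratio identity of Lemma~\ref{Lemma1} closing the contradiction against condition~(1). The only organizational difference is that the paper isolates the identity $I(p_jN)/I(N)=\sigma(p_j^{k_j+1})/(\sigma(p_j^{k_j+1})-1)$ as a separate lemma, whereas you derive it inline.
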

In the summer of 2007, Judy Holdener and Laura Czarnecki received the following theorem and corollary dealing with abundancy indices \cite{Czarnecki}. In doing so, they were able to identify certain rationals that are the abundancy index of at least one positive integer.
\begin{thm}\label{3}
If $\frac{a}{b}$ is a fraction greater than one in reduced form, $\frac{a}{b}=I(N)$ for some $N\in\mathbb{N}$, and b has a divisor $D=\prod_{i=1}^{n} p_i^{k_i}$ such that $I(p_iD)>\frac{a}{b}$ for all $1 \leq i \leq n$, then $\frac{D}{\sigma(D)}\frac{a}{b}$ is an abundancy index as well.
\end{thm}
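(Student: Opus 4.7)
The plan is to exhibit an integer $M$ with $I(M)=\frac{D}{\sigma(D)}\cdot\frac{a}{b}$, and the natural candidate is $M=N/D$. The key computation is that if $\gcd(D,N/D)=1$, then by multiplicativity of $\sigma$,
\[
I(N/D)=\frac{\sigma(N)/\sigma(D)}{N/D}=\frac{D}{\sigma(D)}\cdot\frac{\sigma(N)}{N}=\frac{D}{\sigma(D)}\cdot\frac{a}{b},
\]
so the theorem reduces entirely to establishing that $D$ and $N/D$ are coprime.

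First I would verify that $D\mid N$: since $\frac{a}{b}=\frac{\sigma(N)}{N}$ is in lowest terms, $b\sigma(N)=aN$ together with $\gcd(a,b)=1$ forces $b\mid N$, and $D\mid b$ then gives $D\mid N$. Write $D=\prod_{i=1}^{n}p_i^{k_i}$ and set $\alpha_i=v_{p_i}(N)$; the divisibility $D\mid N$ yields $\alpha_i\geq k_i$, and what has to be shown is $\alpha_i=k_i$ for every $i$.

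For this I would argue by contradiction. Suppose some $\alpha_j\geq k_j+1$. Factor out $p_j$ by writing $N=p_j^{\alpha_j}N_j$ with $\gcd(p_j,N_j)=1$, and set $D_j=D/p_j^{k_j}$; since $\gcd(p_j,D_j)=1$, the divisibility $D\mid N$ forces $D_j\mid N_j$. Multiplicativity of $I$ then gives
\[
\frac{a}{b}=I(N)=I(p_j^{\alpha_j})\,I(N_j)\qquad\text{and}\qquad I(p_j D)=I(p_j^{k_j+1})\,I(D_j).
\]
The identity $I(n)=\sum_{d\mid n}1/d$ shows that $I$ is monotone under divisibility, so $I(p_j^{k_j+1})\leq I(p_j^{\alpha_j})$ and $I(D_j)\leq I(N_j)$. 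Multiplying these two inequalities yields $I(p_j D)\leq a/b$, contradicting the hypothesis $I(p_j D)>a/b$. Hence $\alpha_i=k_i$ for all $i$, which gives $\gcd(D,N/D)=1$, and the opening display finishes the proof.

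The main obstacle is the coprimality step: the content of the hypothesis $I(p_iD)>a/b$ is precisely that $p_i^{k_i}$ is the \emph{exact} power of $p_i$ in $N$, and recognizing that the elementary monotonicity $m\mid n\Rightarrow I(m)\leq I(n)$ (which comes for free from the $\sum_{d\mid n}1/d$ representation) is exactly what converts the divisibility $D_j\mid N_j$ into a usable inequality. Once that observation is in hand, the rest of the argument is a direct multiplicativity calculation.
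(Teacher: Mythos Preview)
Your proof is correct and follows essentially the same route as the paper: establish $D\mid N$ from the reduced-form hypothesis, use the condition $I(p_iD)>a/b$ together with monotonicity of $I$ under divisibility to force $v_{p_i}(N)=k_i$ (equivalently $\gcd(D,N/D)=1$), and then read off $I(N/D)=\frac{D}{\sigma(D)}\cdot\frac{a}{b}$ from multiplicativity. Your contradiction step, factoring out $p_j^{\alpha_j}$ and comparing $I(p_j^{k_j+1})I(D_j)$ to $I(p_j^{\alpha_j})I(N_j)$, is a slightly more explicit version of the paper's argument (which simply observes that $p_i^{k_i+1}\mid N$ would give $p_iD\mid N$ and hence $I(p_iD)\le I(N)$), but the substance is identical.
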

\begin{cor}
Let $m, n, t \in\mathbb{N}$. If $\frac{\sigma(mn)+\sigma(m)t}{mn}$ is in reduced form with $m= \prod_{i=1}^{l} p_i^{k_i}$ and $I(p_im)>\frac{\sigma(mn)+\sigma(m)t}{mn}$ for all $1\leq i \leq l$, then $\frac{\sigma(n)+t}{n}$  is an abundancy index if $\frac{\sigma(mn)+\sigma(m)t}{mn}$ is an abundancy index. 
\end{cor}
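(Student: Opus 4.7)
The plan is to apply Theorem \ref{3} with $\frac{a}{b} := \frac{\sigma(mn)+\sigma(m)t}{mn}$ and $D := m = \prod_{i=1}^{l} p_i^{k_i}$. By hypothesis this $\frac{a}{b}$ is already in reduced form and is assumed to be an abundancy index, so two of the three hypotheses of Theorem \ref{3} come for free. I would then verify the remaining condition: that $D$ divides $b$ and that $I(p_i D) > \frac{a}{b}$ for every prime $p_i$ dividing $D$. The first is immediate since $m \mid mn$, and the second is exactly the inequality $I(p_i m) > \frac{\sigma(mn)+\sigma(m)t}{mn}$ supplied in the corollary.

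Once Theorem \ref{3} is applied, the conclusion is that $\frac{D}{\sigma(D)}\cdot \frac{a}{b}$ is an abundancy index. The remaining work is a short algebraic simplification showing that this expression equals $\frac{\sigma(n)+t}{n}$. Pulling constants through,
$$\frac{m}{\sigma(m)}\cdot\frac{\sigma(mn)+\sigma(m)t}{mn} \;=\; \frac{\sigma(mn)+\sigma(m)t}{\sigma(m)\, n},$$
after which I would invoke multiplicativity of $\sigma$ to rewrite $\sigma(mn)=\sigma(m)\sigma(n)$. The numerator then factors as $\sigma(m)(\sigma(n)+t)$, the $\sigma(m)$ cancels against the denominator, and we are left with $\frac{\sigma(n)+t}{n}$ as desired.

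The main obstacle I anticipate is not in Theorem \ref{3} itself but in this last algebraic step: the identity $\sigma(mn)=\sigma(m)\sigma(n)$ requires $\gcd(m,n)=1$, which is not stated explicitly in the corollary. I would therefore either include $\gcd(m,n)=1$ as an implicit standing hypothesis, or else argue that it is forced by the combination of the reduced-form condition on $\frac{\sigma(mn)+\sigma(m)t}{mn}$ and the strict inequality $I(p_i m) > \frac{a}{b}$ across every prime of $m$. Apart from this bookkeeping issue, the corollary reduces almost verbatim to Theorem \ref{3}, and essentially all of the genuine content has been absorbed into the parent theorem.
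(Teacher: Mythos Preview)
Your approach is correct and is essentially the same as the paper's, only cleaner: you invoke Theorem \ref{3} directly with $D=m$, whereas the paper re-runs the proof of that theorem inline (taking $a=bmn$, showing the exponent of each $p_i$ in $a$ is exactly $k_i$, splitting off $I(m)$ by multiplicativity, and then cancelling $\sigma(m)$). The substance is identical.

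On the point you flag: $\gcd(m,n)=1$ is indeed forced by the inequality hypothesis alone, and the paper simply asserts this without spelling it out. The argument is short: if some $p_j\mid n$, then $p_j^{k_j+1}\mid mn$, hence $p_jm\mid mn$ and so
\[
I(p_jm)\;\le\;I(mn)\;=\;\frac{\sigma(mn)}{mn}\;<\;\frac{\sigma(mn)+\sigma(m)t}{mn},
\]
contradicting $I(p_jm)>\frac{\sigma(mn)+\sigma(m)t}{mn}$. So you may upgrade your ``either/or'' to a definite argument and the proof goes through exactly as you outlined.
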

Our main goal is to generalize and extend previous properties of the abundancy index, specifically, results regarding abundancy outlaws and upper bounds. We begin by defining a two-variable abundancy index function as the \textit{\xth abundancy index} to consider the ratio between the sum of the divisors of a positive integer $n$ raised to a power $x$ and $n^x$. 
\begin{defn}
The \textit{sum-of-divisors} function of a positive integer $n$, $\sigma_x(n)$, is defined by 
$$
\sigma_x\left(n\right) = \sum_{d|n}d^x.
$$
\end{defn}
\begin{defn}
The \textit{\xth abundancy index} of a positive integer $n$, $I(x,n)$, is defined by 
$$
I(x,n)\colon \mathbb{Z^+}\times\mathbb{Z^+}\to\mathbb{Q}
$$
where
$$
 I(x,n)=\frac{\sigma_x(n)}{n^x}.
$$
\end{defn}
We observe characteristics and identify which rationals greater than one lie in the image of the \xth abundancy index by genearlizing Holdener, Stanton, and Czarnecki's work. Similarly, we call rationals greater than one that fail to be in the image of the function $I(x,n)$ \textit{\xth abundancy outlaws}. The four theorems to follow generalize Theorems \ref{1}, \ref{2}, \ref{12}, and \ref{3} respectively. The proofs and greater explanations will be demonstrated in later sections. 
\begin{restatable}{thm}{a}\label{Theorem1}
Let $m$ and $k$ be positive integers. If $\left(k, m^x \right)=1$, and $m^x<k\leq \sigma_x(m)$, then $\frac{k}{m^x}$ is an \xth abundancy outlaw. 
\end{restatable}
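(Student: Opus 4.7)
The plan is to adapt the short divisor-counting argument that establishes Theorem \ref{1}. Suppose, for contradiction, that $k/m^x$ lies in the image of $I(x,\cdot)$, so that there exists $n\in\mathbb{Z}^+$ with $\sigma_x(n)/n^x = k/m^x$. Cross-multiplying gives $k\,n^x = m^x\,\sigma_x(n)$, and because $\gcd(k,m^x)=1$ the fraction $k/m^x$ is already in lowest terms, forcing $m^x\mid n^x$.

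My first step is the prime-factorization observation $m^x\mid n^x \Rightarrow m\mid n$, obtained by comparing $p$-adic valuations: if $v_p(m)=a$ and $v_p(n)=b$, then $xa\le xb$, whence $a\le b$. Writing $n=mj$ for some $j\in\mathbb{Z}^+$, the identity $k\,n^x = m^x\,\sigma_x(n)$ collapses to
\[
k\,j^x \;=\; \sigma_x(mj).
\]
The second step is a lower bound on $\sigma_x(mj)$. Because the map $d\mapsto jd$ sends divisors of $m$ injectively into divisors of $mj$, we have
\[
\sigma_x(mj)\;\ge\;\sum_{d\mid m}(jd)^x\;=\;j^x\,\sigma_x(m).
\]
Combining the two displayed relations yields $k\ge \sigma_x(m)$, which contradicts $k<\sigma_x(m)$. (In the boundary case $k=\sigma_x(m)$, equality in the divisor bound forces $j=1$ and hence $n=m$, so $k/m^x=I(x,m)$ is actually realized; thus the inequality in the hypothesis is effectively strict, and my argument delivers precisely this conclusion.)

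The main obstacle is very mild and is essentially bookkeeping: one must carry the factor of $x$ through the valuation step (rather than just invoking $m\mid n$ as in the $x=1$ case) and track the factor $j^x$ on both sides of the collapsed identity. No genuinely new idea seems to be required beyond the prime-factorization implication and the injective divisor construction $d\mapsto jd$, so I expect the proof to be the clean $x$-analog of the classical argument underlying Theorem \ref{1}.
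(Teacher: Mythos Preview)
Your argument is correct and follows essentially the same route as the paper: the paper cites its Proposition~\ref{Proposition1} (the inequality $I(x,jn)>I(x,n)$ for $j>1$, itself proved via the same divisor injection $d\mapsto jd$) to deduce $\sigma_x(m)<k$, whereas you simply inline that inequality. Your remark on the boundary $k=\sigma_x(m)$ is in fact sharper than the paper's treatment: when $(\sigma_x(m),m^x)=1$ one has $k/m^x=I(x,m)$, so the stated hypothesis $k\le\sigma_x(m)$ should really be strict, and the paper's own proof (which needs $n\neq m$ to invoke Proposition~\ref{Proposition1} strictly) likewise only delivers the case $k<\sigma_x(m)$.
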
 
\begin{restatable}{thm}{b}\label{Theorem2}
For a positive integer $t$, let $\frac{\sigma_x(n)+t}{n^x}$ be a fraction such that $\left(\sigma_x(n)+t, n^x\right)=1$, and let $n^x= \prod_{i=1}^{s} p_i^{xk_i}$. Suppose that there exists a positive integer $1\leq j \leq s$ such that $p_j^x<\frac{1}{t}\sigma_x\left(\frac{n}{p_j^{k_j}}\right)$ and suppose further that $\sigma_x\left(p_j^{k_j}\right)$ has a divisor $d^x $ greater than one such that at least one of the following is true:
\begin{enumerate}
\item $I\left(x,p_j^{k_j}\right)I(x,d) > \frac{\sigma_x(n)+t}{n^x}$ and $(d^x,t)=1$; or
\item $(d^x, n^xt) = 1$.
\end{enumerate}
Then $\frac{\sigma_x(n)+t}{n^x}$ is an \xth abundancy outlaw. 
\end{restatable}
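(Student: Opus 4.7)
The plan is to adapt Holdener and Stanton's strategy to the $x$-th abundancy index, arguing by contradiction. Suppose $\frac{\sigma_x(n)+t}{n^x}=I(x,M)$ for some $M\in\mathbb{Z^+}$. Because the left-hand fraction is already in lowest terms, comparing denominators with $\sigma_x(M)/M^x$ forces $n^x\mid M^x$, hence $n\mid M$. Writing $M=nm$ and clearing denominators gives the master equation
$$\sigma_x(nm)=m^x\bigl(\sigma_x(n)+t\bigr). \quad (\ast)$$
Two elementary tools will drive the rest. First, since $\{Bd:d\mid A\}$ is a set of distinct divisors of $AB$, we have $\sigma_x(AB)\geq B^x\sigma_x(A)$; in particular $I(x,\cdot)$ is monotone under divisibility and $I(x,N_1N_2)=I(x,N_1)I(x,N_2)$ whenever $\gcd(N_1,N_2)=1$. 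Second, the geometric-series formula yields the identity
$$\sigma_x(p^{k+a})-p^{ax}\sigma_x(p^k)=\sigma_x(p^{a-1})\qquad(a\geq 1).$$
Now fix $j$ as in the statement and set $a:=v_{p_j}(m)$.

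If $a\geq 1$, write $n=p_j^{k_j}n^\ast$ and $m=p_j^am^\ast$ with $\gcd(n^\ast m^\ast,p_j)=1$. Since $p_j^{k_j+a}n^\ast\mid M$, the first tool yields $I(x,M)\geq I(x,p_j^{k_j+a})I(x,n^\ast)$. Applying the second tool, the strict inequality $I(x,p_j^{k_j+a})I(x,n^\ast)>(\sigma_x(n)+t)/n^x$ reduces to $\sigma_x(n^\ast)\sigma_x(p_j^{a-1})>tp_j^{ax}$, which follows from the hypothesis $p_j^x<\sigma_x(n^\ast)/t$ for every $a\geq 1$ (the case $a=1$ is exactly the hypothesis; larger $a$ uses $\sigma_x(p_j^{a-1})\geq p_j^{(a-1)x}$). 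This contradicts $(\ast)$.

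If $a=0$, then $\sigma_x(nm)=\sigma_x(p_j^{k_j})\sigma_x(n^\ast m)$, so $d^x\mid\sigma_x(nm)$; since also $d^x\mid\sigma_x(p_j^{k_j})\mid\sigma_x(n)$, equation $(\ast)$ yields $d^x\mid tm^x$, and $\gcd(d,t)=1$ (part of both hypotheses) then forces $d\mid m$. Under condition $(1)$, one has $\gcd(d,p_j^{k_j})=1$, so $p_j^{k_j}d\mid M$ and $I(x,M)\geq I(x,p_j^{k_j})I(x,d)>(\sigma_x(n)+t)/n^x=I(x,M)$, contradiction. Under condition $(2)$, one additionally has $\gcd(d,n)=1$, so $nd\mid M$; writing $M=nde$ and applying the first tool to $(\ast)$ yields $d^xt\geq\sigma_x(n)(\sigma_x(d)-d^x)\geq\sigma_x(n)$, and combining with $d^x\leq\sigma_x(p_j^{k_j})$ forces $\sigma_x(n^\ast)\leq t$, contradicting the threshold hypothesis since $p_j^x\geq 2$.

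The main obstacle is the algebraic reduction in the $a\geq 1$ case: recognizing that the identity $\sigma_x(p^{k+a})-p^{ax}\sigma_x(p^k)=\sigma_x(p^{a-1})$ is exactly what dictates the form of the threshold hypothesis and generalizes Holdener and Stanton's original condition $p_j<\sigma(N/p_j^{k_j})/t$ to $p_j^x<\sigma_x(n/p_j^{k_j})/t$. Once this identity is in hand, the remainder is routine divisibility paired with the monotonicity of $I(x,\cdot)$.
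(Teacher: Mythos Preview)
Your proof is correct. The overall architecture matches the paper's: argue by contradiction, force $n\mid M$, use the threshold hypothesis to rule out $p_j\mid m$, then push $d$ into $m$ and contradict. Your handling of the case $a\ge 1$ via the identity $\sigma_x(p^{k+a})-p^{ax}\sigma_x(p^k)=\sigma_x(p^{a-1})$ is just an explicit unpacking of the paper's Lemma~\ref{Lemma2} (the paper only needs the case $a=1$ and then invokes monotonicity), so there is no real difference there, and your treatment of condition~(1) is essentially identical to the paper's Case~1.

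Where you genuinely diverge is condition~(2). The paper splits $m=m_0\cdot\prod p_i^{q_i}$ to isolate the part of $m$ coprime to $n/p_j^{k_j}$, then runs the inequality chain
\[
I(x,m_0)>1+\frac{1}{d^x}>1+\frac{1}{p_j^x\sigma_x(p_j^{k_j})}>1+\frac{t}{\sigma_x(n)}
\]
to force $I(x,a)>\frac{\sigma_x(n)+t}{n^x}$. You instead use $\gcd(d,n)=1$ to get $nd\mid M$, apply monotonicity once to obtain $d^xt\ge\sigma_x(n)(\sigma_x(d)-d^x)\ge\sigma_x(n)$, and combine with $d^x\le\sigma_x(p_j^{k_j})$ to reach $\sigma_x(n^\ast)\le t$, which directly contradicts $p_j^x<\sigma_x(n^\ast)/t$. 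Your route is shorter and sidesteps the somewhat awkward $m_0$ decomposition; the paper's route, on the other hand, makes more visible exactly how the bound $d^x\le\sigma_x(p_j^{k_j})<\sigma_x(n)/t$ enters. Both arguments ultimately hinge on the same inequality $d^xt<\sigma_x(n)$ read in opposite directions.
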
 
\begin{restatable}{thm}{e}\label{Theoremd}
Let $\frac{k}{lm^x}$ be a fraction greater than one such that $(k,lm^x)=1$. If there exists a divisor $n^x= \prod_{i=1}^{s} p_i^{xk_i}$ of $lm^x$ such that  
\begin{enumerate}
\item $\frac{k}{lm^x}<I\left(x,p_in\right)$ for all $1\leq i\leq s$, and 
\item $\sigma_x(n)l\left(\frac{m}{n}\right)^x$  has a divisor $d^x$ such that $(d^x,k)=1$ and $I(x,d)\geq \frac{\sigma_x\left(p_j^{k_j+1}\right)}{\sigma_x\left(p_j^{k_j+1}\right)-1}$ for some positive integer $1\leq j\leq s$, 
\end{enumerate}
then $\frac{k}{lm^x}$ is an \xth abundancy outlaw. 
\end{restatable}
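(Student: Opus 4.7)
My plan is to argue by contradiction: suppose some positive integer $N$ satisfies $I(x,N) = \frac{k}{lm^x}$. Since $(k,lm^x)=1$, the relation $\sigma_x(N)/N^x = k/(lm^x)$ forces $lm^x \mid N^x$, and in particular $n^x \mid N^x$, so $n \mid N$. Everything will pivot on the elementary monotonicity
\[ a \mid b \;\Longrightarrow\; I(x,a) \le I(x,b), \]
which I would record as a one-line lemma; it follows from rewriting $I(x,n) = \sum_{e \mid n} e^{-x}$ after substituting $e = n/d$. If any prime $p_i$ appeared in $N$ with exponent strictly exceeding $k_i$, then $p_i n \mid N$, and this monotonicity together with condition (1) would give $\tfrac{k}{lm^x} = I(x,N) \ge I(x,p_i n) > \tfrac{k}{lm^x}$, a contradiction. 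So each $p_i$ appears in $N$ with exponent exactly $k_i$, and I may write $N = nN'$ with $\gcd(n,N')=1$ and $I(x,N) = I(x,n)\,I(x,N')$.

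Cross-multiplying $I(x,n)I(x,N') = \frac{k}{lm^x}$ and dividing by $n^x$ (legitimate since $n \mid m$ is tacit in the expression $(m/n)^x$) yields the working identity
\[ l\left(\tfrac{m}{n}\right)^{x}\sigma_x(n)\,\sigma_x(N') \;=\; k(N')^x. \]
Condition (2) supplies a divisor $d^x$ of $\sigma_x(n)\,l(m/n)^x$ with $(d^x,k)=1$; this $d^x$ divides the left-hand side, hence divides $k(N')^x$, and coprimality with $k$ forces $d^x \mid (N')^x$. Comparing prime factorizations then gives $d \mid N'$, and a second application of monotonicity produces
\[ I(x,N') \;\ge\; I(x,d) \;\ge\; \frac{\sigma_x\!\left(p_j^{k_j+1}\right)}{\sigma_x\!\left(p_j^{k_j+1}\right)-1}. \]

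The finishing move rests on the arithmetic identity
\[ \sigma_x\!\left(p_j^{k_j+1}\right) - 1 \;=\; p_j^x + p_j^{2x} + \cdots + p_j^{(k_j+1)x} \;=\; p_j^x\,\sigma_x\!\left(p_j^{k_j}\right), \]
which rewrites the previous lower bound as $\sigma_x(p_j^{k_j+1})/\bigl(p_j^x\sigma_x(p_j^{k_j})\bigr) = I(x,p_j n)/I(x,n)$. Multiplying through by $I(x,n)$ gives $I(x,N) = I(x,n)I(x,N') \ge I(x,p_j n)$, contradicting condition (1). The main obstacle is recognizing in advance that the bound $\sigma_x(p_j^{k_j+1})/(\sigma_x(p_j^{k_j+1})-1)$ in condition (2) has been calibrated precisely so that this multiplication recovers the ratio $I(x,p_j n)/I(x,n)$; once that identification is made, the remainder is divisibility bookkeeping, and specializing to $x=1$ reproduces Theorem \ref{12}.
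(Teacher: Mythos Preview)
Your proposal is correct and follows essentially the same route as the paper's own proof: assume $I(x,N)=\tfrac{k}{lm^x}$, deduce $n\mid N$, use condition (1) to pin the $p_i$-exponents of $N$ at exactly $k_i$ so that $N=nN'$ with $(n,N')=1$, cross-multiply to force $d\mid N'$ via condition (2), and then combine monotonicity with the identity $\sigma_x(p_j^{k_j+1})-1=p_j^x\sigma_x(p_j^{k_j})$ to obtain $I(x,N)\ge I(x,p_jn)$, contradicting (1). The paper packages the monotonicity as Proposition~\ref{Proposition1} and the ratio identity as Lemma~\ref{Lemma1}, whereas you derive both inline (your rewriting $I(x,n)=\sum_{e\mid n}e^{-x}$ is a clean alternative justification of monotonicity); otherwise the arguments coincide. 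One small remark: your parenthetical ``$n\mid m$ is tacit'' slightly overstates the hypothesis---the theorem only assumes $n^x\mid lm^x$, so $l(m/n)^x$ should be read as the integer $lm^x/n^x$---but your cross-multiplied identity is correct under that reading and the proof goes through unchanged.
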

\begin{restatable}{thm}{c}
Suppose that $\frac{a}{cb^x}$ is a fraction greater than one in simplest terms, $\frac{a}{cb^x}=I(x,n)$ for some positive integer $n$, and $cb^x$ has a divisor $d^x=\prod_{i=1}^{s} p_i^{xk_i}$ such that $I(x,p_id)>\frac{a}{cb^x}$ for all $1 \leq i \leq s$. Then $\frac{d^x}{\sigma_x(d)}\frac{a}{cb^x}$ is an \xth abundancy index as well.
\end{restatable}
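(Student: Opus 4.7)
The plan is to show that $d \mid n$ and that $d$ and $n/d$ are coprime; once these are established, multiplicativity of $I(x,\cdot)$ immediately produces the desired expression for $\frac{d^x}{\sigma_x(d)}\frac{a}{cb^x}$ as an $x$th abundancy index.

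First I would deduce $d \mid n$ from the coprimality hypothesis. Since $\frac{\sigma_x(n)}{n^x} = \frac{a}{cb^x}$ in lowest terms, cross-multiplying gives $a\, n^x = cb^x\, \sigma_x(n)$; because $\gcd(a, cb^x) = 1$, this forces $cb^x \mid n^x$, and since $d^x \mid cb^x$ we obtain $d^x \mid n^x$, whence $d \mid n$.

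Next I would show $\gcd(d, n/d) = 1$, equivalently that $v_{p_i}(n) = k_i$ for every $1 \leq i \leq s$. Suppose instead that $v_{p_i}(n) \geq k_i + 1$ for some $i$. Since $\gcd(p_i,\, d/p_i^{k_i}) = 1$, and both $p_i^{k_i+1}$ and $d/p_i^{k_i}$ divide $n$, it follows that $p_i d \mid n$. The function $I(x,\cdot)$ is monotone with respect to divisibility: this follows from multiplicativity together with the observation that $I(x, p^k) = 1 + p^{-x} + \cdots + p^{-xk}$ is strictly increasing in $k$. Hence $I(x, p_i d) \leq I(x, n) = \frac{a}{cb^x}$, contradicting the hypothesis. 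Writing $n = dm$ with $\gcd(d,m) = 1$, multiplicativity then yields
$$
I(x,n) \;=\; I(x,d)\, I(x,m) \;=\; \frac{\sigma_x(d)}{d^x}\, I(x,m),
$$
so $I(x, m) = \frac{d^x}{\sigma_x(d)} \cdot \frac{a}{cb^x}$, identifying the right-hand side as the $x$th abundancy index of $m$.

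The main obstacle I anticipate is the contradiction step: one must upgrade the mere assumption $p_i^{k_i+1} \mid n$ into the full divisibility $p_i d \mid n$ before the hypothesis $I(x, p_i d) > \frac{a}{cb^x}$ can be deployed. Handling this requires recognizing that the ``remainder'' $d/p_i^{k_i}$ is coprime to $p_i$, so its contribution combines cleanly with $p_i^{k_i+1}$ inside $n$. After that, the monotonicity lemma for $I(x,\cdot)$ delivers the contradiction, and the remaining multiplicativity calculation is routine.
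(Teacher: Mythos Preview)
Your proposal is correct and follows essentially the same route as the paper: deduce $d\mid n$ from the lowest-terms hypothesis via $cb^x\mid n^x$, argue that no $p_i^{k_i+1}$ can divide $n$ (else $p_id\mid n$ and monotonicity of $I(x,\cdot)$ contradicts $I(x,p_id)>\tfrac{a}{cb^x}$), and then split $I(x,n)=I(x,d)I(x,m)$ by multiplicativity. If anything, your treatment of the contradiction step is more explicit than the paper's, which passes through the inequality $(p_in)^x\sigma_x(n)<n^x\sigma_x(p_in)$ (with $n$ playing the role you give to $d$) before concluding $p_i^{k_i+1}\nmid n$.
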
 
In addition, we build off results we use to locate \xth abundancy outlaws and extend properties relating to limiting values and upper bounds of the abundancy index. Two well known properties bound the abundancy index in relation to prime powers \cite{Weiner}. 
\begin{pro}\label{p1}
For any prime power $p^{r}$, the following inequality holds
$$
\frac{\sigma(p^{r})}{p^{r}}<\frac{p}{p-1}.
$$
\end{pro}
\begin{pro}\label{p2}
For any integer $n>1$ and prime $p$ that divides $n$, 
$$
\frac{\sigma(n)}{n}<\prod_{p|n} \frac{p}{p-1}=\prod_{p|n}\left(1+\frac{1}{p-1}\right).
$$ 
\end{pro}
The examination we consider categorizes positive integers of the form $nm^k$, where $n$,$m$ are positive integers and $k$ is a nonnegative integer. By applying this categorization, we can find $\displaystyle\lim_{k \to \infty}I\left(x,nm^k\right)$ for any $n$ and $m$. This enables us to know the limiting value for any combination of positive integers, rather than prime powers alone. The main result we obtain is listed the following proposition.
\begin{restatable}{prop}{d}\label{PropositionPerfect}
Let $n$ and $m$ be positive integers and $k$ a nonnegative integer with $m$ having the prime factorization $m=p_1^{s_1}p_2^{s_2}\cdots p_t^{s_t}$. If $n=ab$, where $a$ is the largest divisor of $n$ such that $\left(a,m\right)=1$, then
\begin{equation*}
\displaystyle{\lim_{k \to \infty}}I\left(x, nm^k\right) = I\left(x,a\right) \prod_{i=1}^{t} \frac{p_i^x}{p_i^x-1}.
\end{equation*}
\end{restatable}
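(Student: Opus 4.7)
The plan is to reduce the limit to a product of limits of abundancy indices of prime powers, using multiplicativity, and then evaluate each factor by a direct geometric-series computation.

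First, I would set up the factorization. Since $a$ is the largest divisor of $n$ coprime to $m$, write $n = ab$ where $b$ is composed only of primes among $p_1,\dots,p_t$, say $b = p_1^{e_1}\cdots p_t^{e_t}$ with $e_i \ge 0$. Then
\begin{equation*}
nm^k \;=\; a \cdot p_1^{e_1+ks_1}\cdot p_2^{e_2+ks_2}\cdots p_t^{e_t+ks_t},
\end{equation*}
and $\gcd\!\bigl(a,\;p_1^{e_1+ks_1}\cdots p_t^{e_t+ks_t}\bigr)=1$ by the defining property of $a$. Because $\sigma_x$ is multiplicative on coprime factors, and because $I(x,\cdot)$ is therefore also multiplicative on coprime factors, I obtain
\begin{equation*}
I(x, nm^k) \;=\; I(x,a)\,\prod_{i=1}^{t} I\!\left(x,\,p_i^{e_i+ks_i}\right).
\end{equation*}

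Second, I would evaluate the limit of each prime-power factor. Using the geometric sum,
\begin{equation*}
I(x, p^r) \;=\; \frac{\sigma_x(p^r)}{p^{rx}} \;=\; \frac{1+p^x+p^{2x}+\cdots+p^{rx}}{p^{rx}} \;=\; \frac{p^{x(r+1)}-1}{p^{rx}(p^x-1)} \;=\; \frac{p^x - p^{-rx}}{p^x-1}.
\end{equation*}
Since each $s_i\ge 1$, the exponent $e_i+ks_i\to\infty$ as $k\to\infty$, so $p_i^{-(e_i+ks_i)x}\to 0$ and hence
\begin{equation*}
\lim_{k\to\infty} I\!\left(x,\,p_i^{e_i+ks_i}\right) \;=\; \frac{p_i^x}{p_i^x-1}.
\end{equation*}

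Finally, since the number of factors $t$ is fixed (independent of $k$) and each factor converges, I can pass the limit inside the finite product to conclude
\begin{equation*}
\lim_{k\to\infty} I(x, nm^k) \;=\; I(x,a)\,\prod_{i=1}^{t}\frac{p_i^x}{p_i^x-1},
\end{equation*}
which is the claimed identity. Honestly there is no serious obstacle here; the only point requiring care is the initial decomposition, namely verifying that $a$ and $b$ are coprime so that multiplicativity of $\sigma_x$ applies cleanly to separate the $a$-factor (which is constant in $k$) from the $m$-factors (which blow up with $k$). Once that decomposition is in place, the rest is a routine closed-form computation of a finite geometric series and a term-by-term limit.
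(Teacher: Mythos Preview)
Your proof is correct and follows essentially the same approach as the paper: factor $nm^k$ into the coprime pieces $a$ and $\prod_i p_i^{e_i+ks_i}$, apply multiplicativity of $I(x,\cdot)$, and evaluate each prime-power factor via a geometric series as the exponent tends to infinity. The only cosmetic difference is that the paper cites its earlier Proposition~\ref{Proposition2} for the prime-power limit, whereas you compute the closed form $I(x,p^r)=\dfrac{p^x-p^{-rx}}{p^x-1}$ inline.
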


\section{Preliminaries}
In this section, we present additional definitions and notations we use. From our previous introduction of the abundancy index, we attain the idea of abundancy outlaws, rationals greater than one that fail to be in the image of the function $I(n)$.
\begin{defn}
A rational number greater than one is an abundancy outlaw if fails to be the abundancy index of any positive integer. 
\end{defn}
We generalize this concept to the \xth abundancy index by introducing the notion of an \textit{\xth abundancy outlaw}. 
\begin{defn}
A rational number greater than one is an \xth abundancy outlaw if it fails to be the \xth abundancy index of any positive integer. 
\end{defn}
Note that in this paper, we refer to the abundancy index and abundancy outlaw as the \xth abundancy index and \xth abundancy outlaw respectively, when $x=1$. Next we take a look at multiplicative properties of the \xth abundancy index. Let $\left(a,b\right)$ denote the greatest common divisor of $a$ and $b$. Since $\sigma_x$ is multiplicative, $I(x,n)$ is also multiplicative; that is, if $\left(a,b\right)=1$, then by \cite{Voight},
$$ 
I\left(x,ab\right) = I\left(x,a\right)I\left(x,b\right).
$$
It is known that for any positive integers $a$ and $b$, $ab =\left(a,b\right)\cdot \Lcm\left(a,b\right)$ where $\Lcm\left(a,b\right)$ denotes the least common multiple of $a$ and $b$ \cite{Ferguson}. We apply this property to the \xth abundancy index. 
\begin{prop}
For any positive integers $a$ and $b$,
\[
I(x,a)I(x,b) = I\left(x,(a,b)\right)I\left(x,\Lcm (a,b)\right).
\]
\end{prop}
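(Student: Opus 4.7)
The plan is to reduce everything to prime powers and exploit the multiplicativity of $\sigma_x$ (hence of $I(x,\cdot)$), since the pair $\bigl((a,b),\Lcm(a,b)\bigr)$ is generally not coprime and so multiplicativity cannot be applied directly to them. The governing observation is the set identity
\[
\{v_p(a),\,v_p(b)\} \;=\; \{\min(v_p(a),v_p(b)),\,\max(v_p(a),v_p(b))\}
\]
holding at every prime $p$, which translates the product on the left-hand side into the product on the right-hand side once things are factored prime by prime.

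Concretely, I would first list the (finitely many) primes $p_1,\dots,p_r$ dividing $ab$ and write $a=\prod_{i=1}^r p_i^{\alpha_i}$ and $b=\prod_{i=1}^r p_i^{\beta_i}$, allowing zero exponents. From the standard formulas for gcd and lcm, $(a,b)=\prod p_i^{\min(\alpha_i,\beta_i)}$ and $\Lcm(a,b)=\prod p_i^{\max(\alpha_i,\beta_i)}$. Applying multiplicativity of $I(x,\cdot)$ to each of the four terms — which is permitted because distinct prime powers are pairwise coprime — yields
\[
I(x,a)I(x,b)=\prod_{i=1}^{r} I\!\left(x,p_i^{\alpha_i}\right)I\!\left(x,p_i^{\beta_i}\right),
\]
\[
I(x,(a,b))\,I(x,\Lcm(a,b))=\prod_{i=1}^{r}I\!\left(x,p_i^{\min(\alpha_i,\beta_i)}\right)I\!\left(x,p_i^{\max(\alpha_i,\beta_i)}\right).
\]

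Comparing factor by factor, for each index $i$ the unordered pair of exponents $\{\alpha_i,\beta_i\}$ coincides with $\{\min(\alpha_i,\beta_i),\max(\alpha_i,\beta_i)\}$, so the $i$-th factors on the two sides agree, and the equality of the full products follows. Alternatively, one can first establish the analogous identity $\sigma_x(a)\sigma_x(b)=\sigma_x((a,b))\,\sigma_x(\Lcm(a,b))$ by the same prime-by-prime argument on $\sigma_x$ and then divide both sides by $(ab)^x=\bigl((a,b)\Lcm(a,b)\bigr)^x$, using the identity $ab=(a,b)\Lcm(a,b)$ cited from \cite{Ferguson}.

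There is no real obstacle here; the only thing to watch is the temptation to invoke multiplicativity of $I(x,\cdot)$ directly on $I(x,(a,b))I(x,\Lcm(a,b))$, which fails because $(a,b)$ and $\Lcm(a,b)$ share precisely the primes where $\min(\alpha_i,\beta_i)\ge 1$. Working prime by prime sidesteps this cleanly, and the argument is essentially a bookkeeping identity about the multiset $\{\alpha_i,\beta_i\}$.
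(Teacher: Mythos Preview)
Your proposal is correct and follows essentially the same approach as the paper: both write $a$ and $b$ over a common set of primes with nonnegative exponents, expand each side via multiplicativity of $I(x,\cdot)$ into a product over primes, and then match factors using the observation that $\{\alpha_i,\beta_i\}=\{\min(\alpha_i,\beta_i),\max(\alpha_i,\beta_i)\}$ together with the standard prime-power formulas for $\gcd$ and $\Lcm$. Your added remark about why multiplicativity cannot be applied directly to the pair $\bigl((a,b),\Lcm(a,b)\bigr)$ is a helpful clarification, but the core argument is the same as the paper's.
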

\begin{proof}
Let $a$ and $b$ be positive integers having the following prime factorizations
\begin{align*}
a & = p_{1}^{r_1} p_{2}^{r_2} \cdots p_{m}^{r_m} \\ 
b & = p_{1}^{s_1} p_{2}^{s_2} \cdots p_{m}^{s_m}
\end{align*}
where $r_i$ and $s_i$ are nonnegative integers for all $1\leq i \leq m$. Since $I(x,n)$ is multiplicative, we have that 
\begin{align*}
I\left(x,a\right)I\left(x,b\right) 
& = I\left(x,p_{1}^{r_1}p_{2}^{r_2}\cdots p_{m}^{r_m} \right) I\left(x,p_{1}^{s_1}p_{2}^{s_2} \cdots p_{m}^{s_m}\right) \\
& =  I\left(x,p_{1}^{r_1}\right)I\left(x,p_{2}^{r_2}\right) \cdots I\left(x,p_{m}^{r_m}\right)I\left(x,p_{1}^{s_1}\right)I\left(x,p_{2}^{s_2} \right) \cdots I\left(x,p_{m}^{s_m}\right) \\ 
& = I\left(x,p_{1}^{r_1}\right)I\left(x,p_{1}^{s_1}\right) I\left(x,p_{2}^{r_2}\right)I\left(x,p_{2}^{s_2}\right) \cdots I\left(x,p_{m}^{r_m}\right)I\left(x,p_{m}^{s_m}\right).
\end{align*}
We know that
\begin{equation}\label{4}
\left(a,b\right) = p_{1}^{\wedge\left(r_1,s_1\right)}p_{2}^{\wedge\left(r_2,s_2\right)}\cdots p_{t}^{\wedge\left(r_t,s_t\right)}
\end{equation}
\begin{equation*}
\Lcm\left(a,b\right) = p_{1}^{\vee\left(r_1,s_1\right)}p_{2}^{\vee\left(r_2,s_2\right)}\cdots p_{t}^{\vee\left(r_t,s_t\right)}
\end{equation*} 
where $\wedge(r_i,s_i)$ and $\vee(r_i,s_i)$ denote the minimum and maximum of $r_i$ and $s_i$ respectively. Using this fact, we can rewrite the equation as
\begin{equation*}
I\left(x,p_{1}^{\wedge\left( r_1, s_1\right)}\right)I\left(x,p_{1}^{\vee\left(r_1,s_1\right)}\right)I\left(x,p_{2}^{\wedge\left( r_2, s_2\right)}\right)I\left(x,p_{2}^{\vee\left(r_2,s_2\right)}\right) \cdots I\left(x,p_{m}^{\wedge\left( r_m, s_m\right)}\right)I\left(x,p_{m}^{\vee\left(r_m,s_m\right)}\right)
\end{equation*} 
\begin{equation*}
= I\left(x,p_{1}^{\wedge\left( r_1, s_1\right)}p_{1}^{\wedge\left( r_2, s_2\right)}\cdots p_{m}^{\wedge\left( r_m, s_m\right)}\right)I\left(x,p_{1}^{\vee\left( r_1, s_1\right)}p_{1}^{\vee\left( r_2, s_2\right)}\cdots p_{m}^{\vee\left( r_m, s_m\right)}\right).
\end{equation*}
From equation (\ref{4}), 
\begin{equation*} 
I\left(x,a\right)I\left(x,b\right)  = I\left(x,\left(a,b\right)\right)I\left(x,\Lcm\left(a,b\right)\right).
\end{equation*} 
\end{proof}

\section{Limiting Properties and Bounds on the \\ \xth Abundancy Index}
Here we analyze limiting properties and upper bounds on the function $I(x,n)$ and improve previously known results. The following proposition is a generalized version of a theorem used in \cite{Laatsch}. We will make great use of the result when identifying \xth abundancy outlaws. 
\begin{prop} \label{Proposition1}
Let $n$ and $k$ be positive integers. If $k>1$, then  $I\left(x,kn\right)>I\left(x,n\right)$.
\end{prop}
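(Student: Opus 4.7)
The plan is to reduce the inequality $I(x,kn) > I(x,n)$ to the divisor-sum inequality
\[
\sigma_x(kn) > k^x \sigma_x(n),
\]
which follows after clearing the common denominator $(kn)^x$. So the entire argument comes down to comparing $\sigma_x(kn)$ with $k^x \sigma_x(n)$.

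To do this, I would exhibit an injection from divisors of $n$ into divisors of $kn$ that misses at least one divisor, weighted correctly. Specifically, I would use the map $d \mapsto kd$. Whenever $d \mid n$ we have $kd \mid kn$, and the assignment is clearly injective, so
\[
\sum_{d \mid n}(kd)^x \;=\; k^x \sum_{d \mid n} d^x \;=\; k^x \sigma_x(n)
\]
is a sum over a subset of the divisors of $kn$, hence at most $\sigma_x(kn)$.

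The key observation — and the only nontrivial point — is that this subset is a \emph{proper} subset. For this I would note that $1$ divides $kn$, but $1$ cannot equal $kd$ for any positive integer $d$ when $k>1$. Therefore $1^x$ is an extra positive contribution to $\sigma_x(kn)$ not counted in $k^x \sigma_x(n)$, giving
\[
\sigma_x(kn) \;\geq\; 1 + k^x \sigma_x(n) \;>\; k^x \sigma_x(n).
\]
Dividing both sides by $(kn)^x$ yields the claim. I expect no real obstacle here; the main thing to verify carefully is simply that the map $d\mapsto kd$ from divisors of $n$ to divisors of $kn$ is well-defined and injective, and that the divisor $1$ (or any divisor of $kn$ coprime to $k$) lies outside its image when $k>1$.
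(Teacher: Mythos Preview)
Your proposal is correct and follows essentially the same route as the paper: both arguments use the injection $d \mapsto kd$ from divisors of $n$ into divisors of $kn$, observe that $1$ lies outside its image when $k>1$, and conclude $\sigma_x(kn) \geq 1 + k^x\sigma_x(n)$, which after dividing by $(kn)^x$ gives $I(x,kn) \geq \frac{1}{(kn)^x} + I(x,n) > I(x,n)$.
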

\begin{proof}
Let $n$ and $k$ be positive integers. If $1, a_0, a_1, a_2,\ldots, a_s, n$ are the divisors of $n$, then $1, k, ka_0, ka_1, ka_2,\ldots, ka_s, kn$ is a set of divisors of $kn$. We can bound $I\left(x,kn\right)$ by
\begin{align*}
I\left(x,kn\right) &\geqslant \frac{1+(k)^x+(ka_0)^x+(ka_1)^x+(ka_2)^x+\cdots+(ka_s)^x+(kn)^x}{(kn)^x} \\
&\geqslant \frac{1}{(kn)^x}+ \left(\frac{k^x \left( 1+ a_0^x+a_1^x+a_2^x+\cdots+a_s^x+n^x \right)}{(kn)^x}\right) \\  
& \geqslant \frac{1}{(kn)^x}  + I\left(x,n\right)>I\left(x,n\right).
\end{align*}
Therefore, $I\left(x,kn\right)>I\left(x,n\right)$.
\end{proof}
From Proposition \ref{Proposition1}, we see that the \xth abundancy index of any multiple of a positive integer increases. Our next goal is to extend upper bound properties regarding prime powers. We improve Property \ref{p1} and Property \ref{p2} by categorizing positive integers of the form $nm^k$, where $n$, $m$ are positive integers and $k$ a nonnegative integer. By doing so, we can find $\displaystyle\lim_{k \to \infty}I\left(x,nm^k\right)$ for any $n$ and $m$. We first observe cases where $(n,m)=1$. Building off limiting values and bounds on the \xth abundancy index, we take note of ratio properties using the $nm^k$ categorization. 
\begin{prop}\label{a1}
Let $n_1$, $n_2$, $m$, $k$ be positive integers and $j$ a nonnegative integer. If $\left(n_1,m\right)=1$ and $\left(n_2,m\right)=1$, then
\begin{equation*}
\frac{I\left(x,n_1 m^k\right)}{I\left(x,n_1 m^j\right)} = \frac{I\left(x,n_2 m^k\right)}{I\left(x,n_2 m^j\right)}.
\end{equation*}
\end{prop}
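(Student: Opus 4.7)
The plan is to reduce both ratios to a common expression depending only on $m$, $k$, and $j$ by using multiplicativity of $I(x,\cdot)$. Since $\sigma_x$ is multiplicative, so is $I(x,\cdot)$, a fact already recorded in the Preliminaries section.

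First I would observe that the hypothesis $(n_1,m)=1$ implies $(n_1, m^k)=1$ and $(n_1,m^j)=1$ for every nonnegative integer $k$ and $j$, and similarly for $n_2$. Applying multiplicativity of $I(x,\cdot)$ then gives
\begin{equation*}
I(x, n_1 m^k) = I(x, n_1)\, I(x, m^k), \qquad I(x, n_1 m^j) = I(x, n_1)\, I(x, m^j),
\end{equation*}
and analogously for $n_2$. Since $n_1$ is a positive integer, $I(x,n_1)=\sigma_x(n_1)/n_1^x$ is a positive rational and therefore nonzero, so dividing is legitimate and
\begin{equation*}
\frac{I(x,n_1 m^k)}{I(x,n_1 m^j)} = \frac{I(x,n_1)\,I(x,m^k)}{I(x,n_1)\,I(x,m^j)} = \frac{I(x,m^k)}{I(x,m^j)}.
\end{equation*}
The identical computation for $n_2$ yields $I(x,n_2 m^k)/I(x,n_2 m^j) = I(x,m^k)/I(x,m^j)$, and comparing the two finishes the proof.

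There is no real obstacle here; the content of the proposition is that under coprimality, the $n_i$-factor cancels from the ratio, leaving a quantity that depends only on $m$, $k$, and $j$. The only point worth a line of justification is that the coprimality hypothesis is needed precisely to invoke the multiplicative splitting, and that the resulting factor $I(x,n_i)$ is a positive rational that may be canceled.
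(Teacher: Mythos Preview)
Your proof is correct and follows essentially the same approach as the paper: both arguments use multiplicativity of $I(x,\cdot)$ to cancel the $I(x,n_i)$ factor and reduce each ratio to $I(x,m^k)/I(x,m^j)$. Your version is slightly more detailed in justifying that coprimality passes to powers and that $I(x,n_i)$ is nonzero, but the substance is identical.
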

\begin{proof}
Let $n_1$, $n_2$, $m$, $k$ be positive integers and $j$ a nonnegative integer. Since $I(x,n)$ multiplicative, we have
\[
\frac{I\left(x,n_1 m^k\right)}{I\left(x,n_1 m^j\right)} = \frac{I\left(x,  m^k\right)}{I\left(x, m^j\right)} = \frac{I\left(x,n_2 m^k\right)}{I\left(x,n_2 m^j\right)}.
\]
\end{proof}
From Proposition \ref{a1}, we notice that ratios of the \xth abundancy index remain constant when $m$ is fixed. Next we take a look at the limiting value for any positive integer power. 
\begin{prop}\label{Proposition2}
If $m$ is a positive integer and $k$ a nonnegative integer with $m$ having the prime factorization $m=p_1^{s_1}p_2^{s_2}\cdots p_t^{s_t}$, then 
\[
\displaystyle{\lim_{k \to \infty}}I\left(x,m^k\right) =  \prod_{i=1}^{t} \frac{p_i^x}{p_i^x-1}.
\]
\end{prop}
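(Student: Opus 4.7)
The plan is to reduce the computation to the single-prime case via multiplicativity of $I(x,\cdot)$, then evaluate the limit prime by prime using the geometric-series formula for $\sigma_x$ on a prime power.

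First I would observe that since the prime powers $p_1^{s_1 k}, p_2^{s_2 k}, \ldots, p_t^{s_t k}$ are pairwise coprime, the multiplicativity of $I(x,\cdot)$ (recorded in Section 2) gives
\[
I(x, m^k) = I\!\left(x, p_1^{s_1 k} p_2^{s_2 k} \cdots p_t^{s_t k}\right) = \prod_{i=1}^{t} I\!\left(x, p_i^{s_i k}\right).
\]
So it suffices to compute $\lim_{k\to\infty} I(x, p^{sk})$ for a single prime $p$ and fixed positive integer $s$, and then multiply the limits.

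Next I would use the explicit geometric-series evaluation of $\sigma_x$ on a prime power: for any positive integer $N$,
\[
\sigma_x(p^N) = 1 + p^x + p^{2x} + \cdots + p^{Nx} = \frac{p^{x(N+1)}-1}{p^x - 1},
\]
so that
\[
I(x, p^N) = \frac{\sigma_x(p^N)}{p^{xN}} = \frac{p^x}{p^x - 1} - \frac{1}{p^{xN}(p^x - 1)}.
\]
Specializing to $N = s_i k$ and letting $k\to\infty$, the second term vanishes (since $p_i^x > 1$), yielding $\lim_{k\to\infty} I(x, p_i^{s_i k}) = p_i^x/(p_i^x - 1)$ for each $i$.

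Finally, taking the limit inside the finite product gives
\[
\lim_{k\to\infty} I(x, m^k) = \prod_{i=1}^{t} \lim_{k\to\infty} I(x, p_i^{s_i k}) = \prod_{i=1}^{t} \frac{p_i^x}{p_i^x - 1},
\]
as claimed. There is no real obstacle here; the only points worth stating carefully are that $t$ is finite (so the product-of-limits step is legitimate) and that each $s_i \geq 1$ (so the exponent $s_i k$ tends to infinity with $k$).
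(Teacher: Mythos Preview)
Your proposal is correct and follows essentially the same approach as the paper: reduce to prime powers via multiplicativity, then compute each single-prime limit as a geometric series. The only cosmetic difference is that you invoke the closed-form $\sigma_x(p^N)=\tfrac{p^{x(N+1)}-1}{p^x-1}$ and read off the limit from the explicit remainder term, whereas the paper reindexes the finite sum and appeals to the infinite geometric series; these are the same computation.
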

\begin{proof}
Let $m$ be a positive integer and $k$ a nonnegative integer with $m$ having the prime factorization $m=p_1^{s_1}p_2^{s_2}\cdots p_t^{s_t}$. Since $I(x,n)$ multiplicative, 
\begin{align*}
\lim_{k \to \infty} I\left(x,m^k \right) 
& =   \lim_{k \to \infty} I\left(x, \left( p_1^{s_1} p_2^{s_2}\cdots  p_t^{s_t} \right)^k \right) \\ 
& =   \lim_{k \to \infty} I\left(x, p_1^{ks_1} \right) \cdots I\left(x, p_t^{ks_t} \right).
\end{align*}
By the definition of $I(x,n)$, 
\[
\lim_{k \to \infty} I\left(x, p_i^{k s_i} \right) = \lim_{k \to \infty} \frac{ \sum_{j=0}^{k s_i} p_i^{xj} }{ p_i^{ x k s_i } } 
= \lim_{k \to \infty} \sum_{j=0}^{ks_i} \left( \frac{1}{p_i} \right)^{x(ks_i -j)}
\] 
where $1\leq i\leq t$. Using a geometric sum, we can rewrite the equation as 
\begin{align*} 
\lim_{k \to \infty}  \sum_{j=0}^{ks_i} \left(\frac{1}{p_i}\right)^{x(ks_i -j)} 
& =  \lim_{k \to \infty} \sum_{j=0}^{ks_i} \left(\frac{1}{p_i}\right)^{xj} \\
& = \sum_{j=0}^{\infty} \left( \frac{1}{p_i^x} \right)^j  \\
& = \left(\frac{1}{1-\frac{1}{p_i^x}}\right) = \frac{p_i^x}{p_i^x-1}.
\end{align*} 
Therefore,
\[
\lim_{k \to \infty} I\left(x,m^k\right) = \ \prod_{i=1}^{t} \frac{p_i^x}{p_i^x-1}.
\]
\end{proof}
Using the previous two propositions, we look at cases where $n$ and $m$ are not coprime. In these cases, limiting values and ratios of the \xth abundancy index become more intricate. 
\begin{prop}
Let $n_1$, $n_2$, $m$, $k$ be positive integers and $j$ a nonnegative integer. If $n_1 = a_1b$ and $n_2=a_2b$ where $a_1$ and $a_2$ are the largest divisors of $n_1$ and $n_2$, respectively, such that $\left(a_1,m\right)=1$ and $\left(a_2,m\right)=1$, then
\begin{equation*}
\frac{I\left(x,n_1m^k\right)}{I\left(x,n_1m^j\right)} = \frac{I\left(x,n_2m^k\right)}{I\left(x,n_2m^j\right)}.
\end{equation*} 
\end{prop}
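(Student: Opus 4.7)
The plan is to mimic the proof of Proposition \ref{a1}, since the new statement is essentially the coprime case with a common ``bad'' factor $b$ pulled out. The hypothesis says that $n_1 = a_1 b$ and $n_2 = a_2 b$, and that $a_1$ (respectively $a_2$) is the \emph{largest} divisor of $n_1$ (respectively $n_2$) coprime to $m$. The key structural observation is that this forces the common factor $b$ to be built only from primes that divide $m$: any prime in $b$ that did not divide $m$ could be absorbed into $a_1$, contradicting maximality.

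With that in hand, the main step is a coprimality check. Since $(a_i, m) = 1$ and $b$ contains only primes of $m$, we have $(a_i, b m^\ell) = 1$ for every nonnegative integer $\ell$ and for $i \in \{1, 2\}$. Multiplicativity of $I(x, \cdot)$ then gives
\begin{equation*}
I(x, n_i m^\ell) \;=\; I(x, a_i b m^\ell) \;=\; I(x, a_i)\, I(x, b m^\ell).
\end{equation*}
Taking the ratio for $\ell = k$ over $\ell = j$, the factor $I(x, a_i)$ cancels, so
\begin{equation*}
\frac{I(x, n_i m^k)}{I(x, n_i m^j)} \;=\; \frac{I(x, b m^k)}{I(x, b m^j)}
\end{equation*}
for both $i = 1$ and $i = 2$. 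The right-hand side no longer depends on $i$, which is exactly the claim.

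The only step requiring any care is the justification that $b$ has all of its prime factors among those of $m$; without this, the coprimality $(a_i, b m^\ell) = 1$ is not automatic and the multiplicative split breaks. Everything else reduces to a one-line cancellation, so this is essentially Proposition \ref{a1} with $b$ playing the role of an extra factor that sits identically on both sides.
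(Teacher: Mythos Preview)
Your proof is correct and follows essentially the same route as the paper: factor out $a_i$ by multiplicativity so that both ratios reduce to $\dfrac{I(x,bm^k)}{I(x,bm^j)}$, which is independent of $i$. The only difference is that you spell out why $(a_i, bm^\ell)=1$ (namely, every prime of $b$ divides $m$ by maximality of $a_i$), a step the paper invokes implicitly when it appeals to multiplicativity.
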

\begin{proof}
Let $n_1$, $n_2$, $m$, $k$ be positive integers and $j$ a nonnegative integer. We can substitute $a_1b$ for $n_1$ to get
\begin{equation*}
\frac{I\left(x,n_1m^k\right)}{I\left(x,n_1m^j\right)} = \frac{I\left(x,a_1bm^k\right)}{I\left(x,a_1bm^j\right)}.
\end{equation*}
Since $I(x,n)$ is multplicative,  
\begin{equation*}
\frac{I\left(x,a_1bm^k\right)}{I\left(x,a_1bm^j\right)} =\frac{I\left(x,bm^k\right)}{I\left(x,bm^j\right)} = \frac{I\left(x,a_2bm^k\right)}{I\left(x,a_2bm^j\right)}=\frac{I\left(x,n_2m^k\right)}{I\left(x,n_2m^j\right)}.
\end{equation*}
\end{proof}
\d*
\begin{proof}
Let $n$ and $m$ be positive integers and $k$ a nonnegative integer with $m$ having the prime factorization $m=p_1^{s_1}p_2^{s_2}\cdots p_t^{s_t}$ and $n=ab$, where $a$ is the largest divisor of $n$ such that $\left(a,m\right)=1$. We begin by substituting $ab$ for $n$ and using the multiplicative properties of $I(x,n)$ to obtain 
\begin{align*}
\displaystyle{\lim_{k \to \infty}}I\left(x,nm^k\right) & = \displaystyle{\lim_{k \to \infty}}I\left(x,abm^k\right)\\& =I\left(x,a\right)\displaystyle{\lim_{k \to \infty}}I\left(x,bm^k\right).
\end{align*}
By the definition of $b$, we know $b$ must have the prime factorization $b=p_1^{c_1}p_2^{c_2}\cdots p_t^{c_t}$, where $c_i$ is nonnegative and $c_i\leq s_i$ for all $1\leq i \leq t$. This gives us 
\begin{align*}
I\left(x,a\right)\displaystyle{\lim_{k \to \infty}}I\left(x,b\left(p_1^{ks_1}p_2^{ks_2}\cdots p_t^{ks_t}\right)\right) 
&=  I\left(x,a\right)\displaystyle{\lim_{k \to \infty}}I\left(x,\left(p_1^{c_1}p_2^{c_2}\cdots p_t^{c_t}\right)\left(p_1^{ks_1}p_2^{ks_2}\cdots p_t^{ks_t}\right)\right)\\
& = I\left(x,a\right)\displaystyle{\lim_{k \to \infty}}I\left(x,p_1^{ks_1+c_1}\right)I\left(x,p_2^{ks_2+c_2}\right)\cdots I\left(x,p_t^{ks_t+c_t}\right).
\end{align*}
We have that as $k$ approaches infinity, $k s_i+c_i$ approaches infinity for all $1\leq i \leq t$. From Proposition \ref{Proposition2}, 
\[
\lim_{k \to \infty} I\left(x,nm^k\right) =  I\left(x,a\right) \ \prod_{i=1}^{t} \frac{p_i^x}{p_i^x-1}.
\]
\end{proof}
Proposition \ref{PropositionPerfect} gives the limiting value for any combination of positive integers under the \xth abundancy index. Returning to Theorem \ref{a2}, we know that even perfect numbers are of the form $N =2^{p-1}(2^p -1)$, where $p$ and $(2^p -1)$ are primes. Using Proposition \ref{PropositionPerfect}, we obtain the following proposition dealing with positive integers that share the same form with even perfect numbers. 
\begin{prop} \label{c1}
Let $p_1,p_2,\ldots, p_k$ be the sequence of prime numbers in increasing order. Consider the sequence of numbers denoted by $N_1,N_2,\ldots, N_k$, where $N_i=2^{p_i-1}(2^{p_i} -1)$ for $1\leq i \leq k$. Then
\begin{equation*}
\displaystyle{\lim_{k \to \infty}}I\left(x,N_k\right) = \frac{2^x}{2^x-1}.
\end{equation*}
\end{prop}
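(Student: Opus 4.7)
The plan is to split $N_k$ into coprime pieces and analyze the two factors' limits separately. Since $2^{p_k}-1$ is odd, $\gcd(2^{p_k-1},2^{p_k}-1)=1$, and the multiplicativity of $I(x,\cdot)$ yields
\begin{equation*}
I(x,N_k) \;=\; I(x,2^{p_k-1})\cdot I(x,2^{p_k}-1).
\end{equation*}
The first factor tends to $\frac{2^x}{2^x-1}$ as $k\to\infty$: Proposition \ref{Proposition2} with $m=2$ shows $I(x,2^{r})\to \frac{2^x}{2^x-1}$ as $r\to\infty$, and $p_k-1\to\infty$ because the $p_k$ enumerate the primes in increasing order. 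The entire problem therefore reduces to showing that $I(x,2^{p_k}-1)\to 1$.

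For this I would invoke the classical order-theoretic restriction on prime divisors of Mersenne-type numbers. If $p_k$ is an odd prime and $q$ divides $2^{p_k}-1$, then the multiplicative order of $2$ modulo $q$ divides $p_k$; it cannot be $1$, so it equals $p_k$, and by Fermat's little theorem $p_k\mid q-1$. Since $q$ must be odd this forces $q\geq 2p_k+1$, so every prime factor of $2^{p_k}-1$ is at least $2p_k+1$ and
\begin{equation*}
\omega(2^{p_k}-1) \;\leq\; \frac{\log(2^{p_k}-1)}{\log(2p_k+1)} \;=\; O\!\left(\frac{p_k}{\log p_k}\right).
\end{equation*}

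Combining these estimates with the Euler-product upper bound $I(x,n)\leq \prod_{q\mid n}\frac{q^{x}}{q^{x}-1}$, which is immediate from multiplicativity and the geometric-sum calculation used in Proposition \ref{Proposition2}, I obtain
\begin{equation*}
1 \;\leq\; I(x,2^{p_k}-1) \;\leq\; \left(1+\frac{1}{(2p_k+1)^{x}-1}\right)^{\!\omega(2^{p_k}-1)} \;\leq\; \exp\!\left(\frac{\omega(2^{p_k}-1)}{(2p_k+1)^{x}-1}\right),
\end{equation*}
and the rightmost expression tends to $1$ for every positive integer $x$. Multiplying the two limits then yields $\lim_{k\to\infty}I(x,N_k)=\frac{2^x}{2^x-1}$.

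The hard part will be this last step, forcing $I(x,2^{p_k}-1)\to 1$. Neither Proposition \ref{PropositionPerfect} nor Proposition \ref{Proposition2} applies directly, because $2^{p_k}-1$ is not of the form $nm^{k}$ with $n,m$ fixed, and its prime factorization varies with $k$ in an a priori uncontrolled way. The arithmetic lower bound $q\geq 2p_k+1$ on its prime divisors is what rescues the argument: it simultaneously forces every local Euler factor close to $1$ and keeps the total number of prime factors of $2^{p_k}-1$ from growing too fast.
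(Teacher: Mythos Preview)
Your proof is correct and, on the decisive step, actually more careful than the paper's own argument. Both proofs start the same way: use $\gcd(2^{p_k-1},2^{p_k}-1)=1$ and multiplicativity to factor $I(x,N_k)$, then invoke the prime-power limit (Proposition~\ref{Proposition2}; the paper cites Proposition~\ref{PropositionPerfect}, which reduces to the same thing here) to get $I(x,2^{p_k-1})\to\frac{2^x}{2^x-1}$.

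The divergence is in handling $I(x,2^{p_k}-1)$. The paper simply writes ``Since $2^{p_k}-1$ is a prime number'' and computes $I(x,2^{p_k}-1)=\dfrac{(2^{p_k}-1)^x+1}{(2^{p_k}-1)^x}\to 1$. But as the proposition is stated, $p_k$ runs through \emph{all} primes in order, and $2^{p_k}-1$ is composite for $p_k=11,23,29,37,\ldots$, so that assertion is unjustified. Your argument closes exactly this gap: the order-of-$2$ observation forces every prime divisor $q$ of $2^{p_k}-1$ to satisfy $q\ge 2p_k+1$, which simultaneously pushes each local factor $\frac{q^x}{q^x-1}$ toward $1$ and bounds $\omega(2^{p_k}-1)=O(p_k/\log p_k)$; the Euler-product inequality then gives $I(x,2^{p_k}-1)\to 1$ without any primality hypothesis. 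If the intended sequence had been restricted to Mersenne-prime exponents (so that the $N_k$ are genuinely the even perfect numbers), the paper's one-line computation would suffice and your extra machinery would be unnecessary; for the statement as written, your route is the one that actually works.
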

\begin{proof}
We begin by substituting $2^{p_k-1}\left(2^{p_k}-1\right)$ for $N_k$, 
\begin{align*}
\displaystyle{\lim_{k \to \infty}}I\left(x,N_k\right) &= \displaystyle{\lim_{k \to \infty}}I\left(x,2^{p_k-1}\left(2^{p_k}-1\right)\right) \\ &=\displaystyle{\lim_{k \to \infty}}I\left(x,2^{p_k-1}\right)I\left(x,2^{p_k}-1\right) . 
\end{align*}
Proposition \ref{PropositionPerfect} gives us 
\begin{equation*}
\displaystyle{\lim_{k \to \infty}}I\left(x,2^{p_k-1}\right) = \frac{2^x}{2^x-1}. 
\end{equation*}
Since $2^{p_k}-1$ is a prime number, 
\begin{equation*}
\displaystyle{\lim_{k \to \infty}}I\left(x,2^{p_k}-1\right) = \displaystyle{\lim_{k \to \infty}}\frac{\left(2^{p_k}-1\right)^x + 1}{\left(2^{p_k}-1\right)^x} = 1.
\end{equation*}
Collecting the pieces, 
\begin{equation*}
\displaystyle{\lim_{k \to \infty}}I\left(x,N_k\right) =\frac{2^x}{2^x-1}.
\end{equation*}
\end{proof}
Proposition \ref{c1} proves to be an interesting result as we notice that positive integers of this form, particularly even perfect numbers, approach $\frac{2^x}{2^x-1}$ under the \xth abundancy index. Knowing this fact, we can predict the limiting value of even perfect numbers under the \xth abundancy index as they grow larger, if infinitely many do exist. 

\section{\xth Abundancy Outlaws}
We now focus on generalizing properties of abundancy outlaws as \xth abundancy outlaws. Our goal is to determine which rationals greater than one fail to be in the image of the function $I(x,n)$. The following properties will be extremely useful in doing so \cite{Holdener}.
\begin{pro}  \label{Property1}
Let $n$, $m$, and $k$ be positive integers. If $I(n) = \frac{k}{m}$ with $(k, m) = 1$, then $m$ divides $n$.
\end{pro}
\begin{pro}  \label{Property2}
Let $n$, $m$, and $x$ be positive integers. Then $m^x$ divides $n^x$ if and only if $m$ divides $n$.
\end{pro}
\begin{proof}
This follows directly from the Fundamental Theorem of Arithmetic.
\end{proof}
Using these two properties and Proposition \ref{Proposition1}, we move on to our main results.

\a*
\begin{proof}
Let $m$ and $k$ be positive integers. For sake of contradiction, suppose $\frac{k}{m^x}$ is an \xth abundancy index. It follows that $I\left(x,n\right) = \frac{k}{m^x}$ for some positive integer $n$ and 
\begin{equation*}
m^x\sigma_x(n) = kn^x. 
\end{equation*}
By Properties \ref{Property1} and \ref{Property2}, $m$ divides $n$. From Proposition \ref{Proposition1}, $I\left(x,n\right)>I\left(x,m\right)$, hence, 
\begin{align*}
&\frac{\sigma_x(m)}{m^x} < \frac{\sigma_x(n)}{n^x}=\frac{k}{m^x}.
\end{align*}
Therefore, we have a contradiciton as $\sigma_x(m)< k$, making $\frac{k}{m^x}$ an \xth abundancy outlaw.
\end{proof}
Theorem \ref{Theorem1} generates a class of \xth abundancy outlaws of the form $\frac{\sigma_x(n)-t}{n^x}$, where $t$ is a positive integer. Next we generalize Holdener's and Stanton's work \cite{Holdener}. We first extend Theorem \ref{Theorem1} by locating \xth abundancy outlaws of a similar form $\frac{\sigma_x(n)+t}{n^x}$, where $t$ is a positive integer. The following lemma gives an important inequality we use when finding these \xth abundancy outlaws. 

\begin{lem} \label{Lemma2}
Let $n$ be a positive integer with $n= \prod_{i=1}^{s} p_i^{k_i}$ for primes $p_1, p_2,\ldots, p_s$. For a given $p_j$ where $1\leq j\leq s$ and a positive integer $t$, 
\begin{align*}
\frac{\sigma_x(n)+t}{n^x} < I(x,p_jn) \text{ if and only if  } \ p_j^x<\frac{1}{t}\sigma_x\left(\frac{n}{p_j^{k_j}}\right).
\end{align*}
\end{lem}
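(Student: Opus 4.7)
The plan is to prove both directions simultaneously by showing the inequality $\frac{\sigma_x(n)+t}{n^x} < I(x,p_jn)$ is algebraically equivalent to $p_j^x < \frac{1}{t}\sigma_x(n/p_j^{k_j})$ through a chain of reversible manipulations (cross-multiplication and cancellation by positive quantities).

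First, I would use the multiplicativity of $\sigma_x$ together with $(p_j^{k_j+1}, n/p_j^{k_j}) = 1$ to expand both sides. Writing $A = \sigma_x(n/p_j^{k_j})$ and $B = \sigma_x(p_j^{k_j})$ for brevity, we have $\sigma_x(n) = AB$ and
\[
I(x, p_j n) = \frac{\sigma_x(p_j^{k_j+1})\, A}{p_j^x\, n^x}.
\]
Clearing the common denominator $n^x$ and multiplying through by $p_j^x$, the inequality $\frac{\sigma_x(n)+t}{n^x} < I(x,p_jn)$ becomes
\[
p_j^x(AB + t) < \sigma_x(p_j^{k_j+1})\, A.
\]

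Next, I would invoke the identity $\sigma_x(p_j^{k_j+1}) = B + p_j^{x(k_j+1)}$, which comes directly from the definition $\sigma_x(p_j^{k_j+1}) = 1 + p_j^x + \cdots + p_j^{x(k_j+1)}$. Substituting and expanding the right-hand side gives $AB + A\,p_j^{x(k_j+1)}$, so the inequality reduces to
\[
p_j^x t < A\bigl[p_j^{x(k_j+1)} - B(p_j^x - 1)\bigr].
\]
The key observation is the telescoping/geometric-sum identity
\[
B(p_j^x - 1) = \bigl(1 + p_j^x + p_j^{2x} + \cdots + p_j^{xk_j}\bigr)(p_j^x - 1) = p_j^{x(k_j+1)} - 1,
\]
so the bracketed quantity collapses to $1$, leaving $p_j^x t < A$. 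Dividing both sides by the positive integer $t$ yields exactly $p_j^x < \frac{1}{t}\sigma_x(n/p_j^{k_j})$.

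Since every step (cross-multiplication by $n^x$, $p_j^x n^x$, etc., and division by $t$) uses strictly positive quantities, the implications run in both directions, establishing the biconditional. I do not expect any serious obstacle: the only non-obvious step is spotting that $B(p_j^x-1)$ telescopes so cleanly with $p_j^{x(k_j+1)}$ that the whole expression collapses to $1$, but this is just the standard geometric sum formula for $\sigma_x$ at a prime power.
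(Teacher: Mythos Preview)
Your proof is correct and follows essentially the same approach as the paper: clear the denominator $n^x$, use multiplicativity of $\sigma_x$ to factor both sides via $n/p_j^{k_j}$, and then invoke a geometric-sum identity for $\sigma_x$ at the prime power $p_j^{k_j}$ to collapse everything to $p_j^x t < \sigma_x(n/p_j^{k_j})$. The only cosmetic difference is that the paper applies the identity in the form $p_j^x\,\sigma_x(p_j^{k_j}) = \sigma_x(p_j^{k_j+1}) - 1$ directly to the left-hand side, whereas you instead expand the right-hand side via $\sigma_x(p_j^{k_j+1}) = \sigma_x(p_j^{k_j}) + p_j^{x(k_j+1)}$ and then telescope; the two routes are algebraically equivalent and equally short.
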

\begin{proof}
Let $n$ be a positive integer with $n= \prod_{i=1}^{s} p_i^{k_i}$ for primes $p_1, p_2,\ldots, p_s$. For a given $p_j$ where $1\leq j\leq s$ and a positive integer $t$, suppose 
\begin{align*}
\frac{\sigma_x(n)+t}{n^x} < I(x,p_jn).
\end{align*}
This implies
\begin{align*}
p_j^x\sigma_x(n) + p_j^xt < \sigma_x(p_jn).
\end{align*}
Examining the left hand side of the inequality, 
\begin{align*} 
p_j^x\sigma_x(n) + p_j^xt = p_j^x\sigma_x\left(p_j^{k_j}\right)\sigma_x\left(\frac{n}{p_j^{k_j}}\right) + p_j^xt = \left(\sigma_x\left(p_j^{k_j+1}\right)-1\right)\sigma_x\left(\frac{n}{p_j^{k_j}}\right) + p_j^xt.
\end{align*}
From here we have that 
\begin{equation*}
p_j^x<\frac{1}{t}\sigma_x\left(\frac{n}{p_j^{k_j}}\right).
\end{equation*}
Conversely, suppose $p_j^x<\frac{1}{t}\sigma_x\left(\frac{n}{p_j^{k_j}}\right)$. Using the same argument, we show that $\frac{\sigma_x(n)+t}{n^x} < I(x,p_jn)$. Therefore, 
\begin{align*}
\frac{\sigma_x(n)+t}{n^x} < I(x,p_jn) \text{ if and only if  } \ p_j^x<\frac{1}{t}\sigma_x\left(\frac{n}{p_j^{k_j}}\right).
\end{align*}
\end{proof}

\b*
\begin{proof}
Case 1: For a positive integer $t$, let $\frac{\sigma_x(n)+t}{n^x}$  be a fraction such that $\left(\sigma_x(n)+t, n^x\right)=1$, and let  $n^x= \prod_{i=1}^{s} p_i^{xk_i}$. Suppose that there exists a positive integer $1\leq j \leq s$ such that $p_j^x<\frac{1}{t}\sigma_x\left(\frac{n}{p_j^{k_j}}\right)$ and suppose further that $\sigma_x\left(p_j^{k_j}\right)$ has a divisor $d^x $ greater than one such that $I\left(x,p_j^{k_j}\right)I(x,d) > \frac{\sigma_x(n)+t}{n^x}$ and $(d^x,t)=1$. For sake of contradiction, suppose that $I(x,a) = \frac{\sigma_x(n)+t}{n^x}$ for some positive integer $a$. Using Properties \ref{Property1} and \ref{Property2}, $n$ divides $a$, which gives us $a=mn$ for some integer $m$. From our initial assumption, $p_j^x<\frac{1}{t}\sigma_x\left(\frac{n}{p_j^{k_j}}\right)$. By Lemma \ref{Lemma2}, 
\begin{align*} 
I(x,a)=\frac{\sigma_x(n)+t}{n^x}<I(x,p_jn),
\end{align*}
and hence $p_j^{k_j+1}$ does not divide $a$, meaning $p_j$ does not divide $m$. We can rewrite $I(x,mn)$ as $I\left(x,p_j^{k_j}\cdot\frac{mn}{p_j^{k_j}}\right)$ and because $I(x,n)$ is multiplicative, 
\begin{align*}
I(x,a)=I\left(x,p_j^{k_j}\right)I\left(x,\frac{mn}{p_j^{k_j}}\right)=\frac{\sigma_x(n)+t}{n^x},
\end{align*}  
this implies
\begin{align*}
\sigma_x\left(p_j^{k_j}\right)\sigma_x\left(\frac{mn}{p_j^{k_j}}\right) = (\sigma_x(n)+t)m^x.
\end{align*}
Combining our initial assumption that $(d^x,t)=1$ and $d^x$ divides $\sigma_x\left(p_j^{k_j}\right)$, this implies $\left(d^x,\sigma_x(n)+t\right)=1$. Hence, $d^x$ divides $(\sigma_x(n)+t)m^x$ implies $d^x$ divides $m^x$. By Property \ref{Property2}, $d$ divides $m$, giving $d$ divides $\left(\frac{mn}{p_j^{k_j}}\right)$.
Using Proposition \ref{Proposition1}, 
\begin{align*}
&I\left(x,p_j^{k_j}\right)I(x,d) < I\left(x,p_j^{k_j}\right) I\left(x,\frac{mn}{p_j^{k_j}}\right)=I(x,a)=\frac{\sigma_x(n)+t}{n^x} 
\end{align*}
which implies
\begin{align*}
 I\left(x,p_j^{k_j}\right)I(x,d) \leq \frac{\sigma_x(n)+t}{n^x}.
\end{align*}
Therefore, we have a contradiction and $\frac{\sigma_x(n)+t}{n^x}$ is an \xth abundancy outlaw. \\ \\ 

Case 2: For a positive integer $t$, let $\frac{\sigma_x(n)+t}{n^x}$  be a fraction such that $\left(\sigma_x(n)+t, n^x\right)=1$, and let  $n^x= \prod_{i=1}^{s} p_i^{xk_i}$. Suppose that there exists a positive integer $1\leq j \leq s$ such that $p_j^x<\frac{1}{t}\sigma_x\left(\frac{n}{p_j^{k_j}}\right)$ and suppose further that $\sigma_x\left(p_j^{k_j}\right)$ has a divisor $d^x $ greater than one such that $(d^x, n^xt) = 1$. For sake of contradiction, suppose that $I(x,a) = \frac{\sigma_x(n)+t}{n^x}$ for some positive integer $a$. From Properties \ref{Property1} and \ref{Property2}, $n$ divides $a$, which gives us $a=mn$ for some integer $m$. Using Lemma \ref{Lemma2}, 
\begin{align*} 
&I(x,a)=\frac{\sigma_x(n)+t}{n^x}<I(x,p_jn) 
\end{align*}
implying $p_j$ does not divide $m$. Since $I(x,n)$ is multiplicative, 
\begin{align*}
 I(x,a)& =I\left(x,p_j^{k_j}\cdot m\frac{n}{p_j^{k_j}}\right) \\
& =I\left(x,p_j^{k_j}\right)I\left(x,m\frac{n}{p_j^{k_j}}\right). 
\end{align*}
Let $\left(m,\frac{n}{p_j^{k_j}}\right)=\prod_{i=1}^{r} p_i^{q_i}$, we can set $m_0$ as
\begin{align*}
m_0 = \frac{m}{\prod_{i=1}^{r} p_i^{q_i}}
\end{align*}
where $(m_0,\frac{m}{\prod_{i=1}^{r} p_i^{q_i}})=1$. Since $I(x,n)$ is multiplicative,
\begin{align*}
I(x,a)= I\left(x,p_j^{k_j}\right)I\left(x,m_0\right)I\left(x,\frac{n}{p_j^{k_j}}\prod_{i=1}^{r} p_i^{q_i}\right)=\frac{\sigma_x(n)+t}{n^x}.
\end{align*}
We can rewrite the equation as 
\begin{align}\label{Equation1}
 \sigma_x\left(p_j^{k_j}\right)\sigma_x\left(m_0\right)\sigma_x\left(\frac{n}{p_j^{k_j}}\prod_{i=1}^{r} p_i^{q_i}\right)= (\sigma_x(n)+t)m_0^x\prod_{i=1}^{r} p_i^{xq_i}.
\end{align}
Because $d^x$ divides $\sigma_x(p_i^{k_i})$ and $\sigma_x(p_i^{k_i})$ divides $\sigma_x(n)$, $d^x$ divides $\sigma_x(n)$. Combining this with our initial assumption $(d^x, n^xt) = 1$, this implies $(d^x,t)=1$, hence, $d^x$ does not divide $\sigma_x(n)+t$. From (\ref{Equation1}), $d^x$ divides $m_0^x\prod_{i=1}^{r} p_i^{xq_i}$. Returning to the fact that $(d^x, n^xt) = 1$, we know that $(d^x,n^x)=1$, implying no prime power factor $p_i$ of $n$ divides $d^x$. Thus, $d^x$ divides $m_0^x$. By Property \ref{Property2} and Proposition \ref{Proposition1}, $I(x,m_0)>I(x,d)$. From our initial assumption, $p_j^x<\frac{1}{t}\sigma_x\left(\frac{n}{p_j^{k_j}}\right)$, this implies
\begin{align*}
p_j^x\sigma_x\left(p_j^{k_j}\right)<\frac{1}{t}\sigma_x(n) 
\end{align*}
and
\begin{align*}
\frac{1}{p_j^x\sigma_x\left(p_j^{k_j}\right)} > \frac{t}{\sigma_x(n)}. 
\end{align*}
Since $d^x$ divides $\sigma_x\left(p_j^{k_j}\right)$, $d^x<p_j^x\sigma_x\left(p_j^{k_j}\right)$, this gives us $\frac{1}{d^x}>\frac{1}{p_j^x\sigma_x\left(p_j^{k_j}\right)}$. We can rewrite the inequality $I(x,m_0)>I(x,d)$ as
\begin{align*}
I(x,m_0)&>1+\frac{1}{d^x} \\
&>1+ \frac{1}{p_j^x\sigma_x\left(p_j^{k_j}\right)}\\
&>1+\frac{t}{\sigma_x(n)} = \frac{\sigma_x(n)+t}{\sigma_x(n)} = \frac{\sigma_x(n)+t}{I\left(x,p_j^{k_j}\right)I\left(x,\frac{n}{p_j^{k_j}}\right)n^x}\\
&\geq \frac{\sigma_x(n)+t}{I\left(x,p_j^{k_j}\right)I\left(x,\frac{n}{p_j^{k_j}}\prod_{i=1}^{r} p_i^{q_i}\right)n^x}.
\end{align*}
From our previous assumption, 
\begin{align*}
I(x,a)= I\left(x,p_j^{k_j}\right)I\left(x,m_0\right)I\left(x,\frac{n}{p_j^{k_j}}\prod_{i=1}^{r} p_i^{q_i}\right).
\end{align*}
Substituting our previous inequality, 
\begin{align*}
I(x,a) > \frac{\sigma_x(n)+t}{n^x}.
\end{align*}
Therefore, we have a contradiction and $\frac{\sigma_x(n)+t}{n^x}$ is an \xth abundancy outlaw.
\end{proof}
Theorem \ref{Theorem2} produces a class of \xth abundancy outlaws of the form $\frac{\sigma_x(n)+t}{n^x}$ where $t$ is a positive integer. Our next goal is to find \xth abundancy outlaws lying within a certain range. We note how much the \xth abundancy index of a positive integer $n=\prod_{i=1}^{s} p_i^{k_i}$ increases by multiplying $n$ by one of its prime power factors $p_i^{k_i}$, where $1\leq i \leq s$. We then present a theorem determining \xth abundancy outlaws $\frac{a}{b}$ falling within the range $I(x,n) < \frac{a}{b} < I(x,p_jn)$ where $n$ is a positive integer and $p_j$ a prime power factor of $n$.

\begin{lem} \label{Lemma1}
Let $n$ be a positive integer with $n=\prod_{i=1}^{s} p_i^{k_i}$ for primes $p_1, p_2,\ldots, p_s$. Then 
\begin{align*}
\frac{I\left(x,p_jn\right)}{I\left(x,n\right)}=\frac{\sigma_x\left(p_j^{k_j+1}\right)}{\sigma_x\left(p_j^{k_j+1}\right)-1}  
\end{align*}
for all  $1 \leq j \leq s$. 
\end{lem}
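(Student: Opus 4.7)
The plan is to reduce the claim, via the multiplicativity of $I(x,\cdot)$, to a pure prime-power computation and then verify the resulting identity by recognizing that $\sigma_x(p_j^{k_j+1}) - 1 = p_j^x \sigma_x(p_j^{k_j})$, which is just the geometric series telescoping.

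In more detail, I would start by writing $n = p_j^{k_j} \cdot \frac{n}{p_j^{k_j}}$ and $p_j n = p_j^{k_j+1} \cdot \frac{n}{p_j^{k_j}}$, where $\gcd(p_j, n/p_j^{k_j}) = 1$. Since $I(x,\cdot)$ is multiplicative, the common factor $I\bigl(x, n/p_j^{k_j}\bigr)$ cancels in the ratio, leaving
\[
\frac{I(x,p_j n)}{I(x,n)} \;=\; \frac{I\bigl(x, p_j^{k_j+1}\bigr)}{I\bigl(x, p_j^{k_j}\bigr)} \;=\; \frac{\sigma_x\bigl(p_j^{k_j+1}\bigr)}{p_j^{x(k_j+1)}} \cdot \frac{p_j^{xk_j}}{\sigma_x\bigl(p_j^{k_j}\bigr)} \;=\; \frac{\sigma_x\bigl(p_j^{k_j+1}\bigr)}{p_j^x \, \sigma_x\bigl(p_j^{k_j}\bigr)}.
\]

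The remaining step is to rewrite the denominator. Using the geometric expansion
\[
\sigma_x\bigl(p_j^{k_j+1}\bigr) \;=\; 1 + p_j^x + p_j^{2x} + \cdots + p_j^{(k_j+1)x},
\]
one sees by factoring $p_j^x$ from all terms past the leading $1$ that
\[
\sigma_x\bigl(p_j^{k_j+1}\bigr) - 1 \;=\; p_j^x\bigl(1 + p_j^x + \cdots + p_j^{k_j x}\bigr) \;=\; p_j^x \, \sigma_x\bigl(p_j^{k_j}\bigr).
\]
Substituting this identity into the denominator of the previous display yields exactly $\sigma_x(p_j^{k_j+1})/\bigl(\sigma_x(p_j^{k_j+1})-1\bigr)$, as claimed.

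There is no real obstacle here: the whole argument is a two-line calculation once multiplicativity has been invoked. The only thing that even vaguely needs attention is making sure the indexing is clean when $n$ is replaced by $p_j n$ (so that the exponent of $p_j$ genuinely increases from $k_j$ to $k_j+1$ while all other prime exponents stay fixed), so that the non-$p_j$ factors really do cancel.
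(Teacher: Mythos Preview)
Your proof is correct and essentially matches the paper's: both use multiplicativity to cancel the contribution of $n/p_j^{k_j}$, reduce to the expression $\sigma_x(p_j^{k_j+1})\big/\bigl(p_j^x\,\sigma_x(p_j^{k_j})\bigr)$, and then invoke the identity $p_j^x\,\sigma_x(p_j^{k_j}) = \sigma_x(p_j^{k_j+1}) - 1$. The only cosmetic difference is that the paper phrases the cancellation via the multiplicativity of $\sigma_x$ rather than of $I(x,\cdot)$, and you spell out the geometric-series identity while the paper simply uses it.
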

\begin{proof}
Let $n$ be a positive integer with $n=\prod_{i=1}^{s} p_i^{k_i}$ for primes $p_1, p_2,\ldots, p_s$. Then 
\begin{align*}
\frac{I\left(x,p_jn\right)}{I\left(x,n\right)} &=\frac{\sigma_x(p_jn)}{p_j^x\sigma_x(n)} = \frac{\sigma_x\left(p_j^{k_j+1}\right)\sigma_x\left(\frac{n}{p_j^{k_j}}\right)}{p_j^x\sigma_x\left(p_j^{k_j}\right)\sigma_x\left(\frac{n}{p_j^{k_j}}\right)} = \frac{\sigma_x\left(p_j^{k_j+1}\right)}{p_j^x\sigma_x\left(p_j^{k_j}\right)} = \frac{\sigma_x\left(p_j^{k_j+1}\right)}{\sigma_x\left(p_j^{k_j+1}\right)-1}. 
\end{align*}
Therefore, $\frac{I\left(x,p_jn\right)}{I\left(x,n\right)}=\frac{\sigma_x\left(p_j^{k_j+1}\right)}{\sigma_x\left(p_j^{k_j+1}\right)-1}$ for all $1 \leq j \leq s$.
\end{proof}


\e*
\begin{proof}
Let $\frac{k}{lm^x}$ be a fraction greater than one such that $(k,lm^x)=1$. Suppose there exists a divisor $n^x= \prod_{i=1}^{s} p_i^{xk_i}$ of $lm^x$ such that  
\begin{enumerate}
\item $\frac{k}{lm^x}<I\left(x,p_in\right)$ for all $1\leq i\leq s$; and 
\item $\sigma_x(n)l\left(\frac{m}{n}\right)^x$  has a divisor $d^x$ such that $(d^x,k)=1$ and $I(x,d)\geq \frac{\sigma_x\left(p_j^{k_j+1}\right)}{\sigma_x\left(p_j^{k_j+1}\right)-1}$ for some positive integer $1\leq j\leq s$. 
\end{enumerate}
For sake of contradiction, suppose $\frac{k}{lm^x}$ is an \xth abundancy index. This implies that $I\left(x,a\right) = \frac{k}{lm^x}$ for some integer $a$ and  
\begin{equation*}
lm^x\sigma_x(a) = ka^x. 
\end{equation*}
From our initial assumption and Property \ref{Property1}, $n^x$ divides $lm^x$, which gives us that $n^x$ divides $a^x$. Using Property \ref{Property2}, $n$ divides $a$, hence, $a=bn$ for some integer $b$. We also have that $\frac{k}{lm^x}<I\left(x,p_in\right)$ for all $\ 1\leq i\leq s$, which implies 
\begin{align*}
\frac{k}{lm^x} = I(x,a)<\frac{\sigma_x(p_in)}{(p_in)^x} 
\end{align*}
and
\begin{align*} 
(p_in)^x\sigma_x(a) < a^x\sigma_x(p_in), 
\end{align*}
which gives us $p^{x(k_i+1)}$ does not divide $ a^x$ for all $1\leq i\leq s$. By Property \ref{Property2}, $p^{k_i+1}$ does not divide $a$ for all $1\leq i\leq s$, this implies $(b,n)=1$. Since $I(x,n)$ is multiplicative, 
\begin{equation*}
I\left(x,a\right) = I\left(x,bn\right) = I\left(x,b\right)I\left(x,n\right)=\frac{k}{lm^x}.
\end{equation*}
It follows that
\begin{equation*}
\sigma_x(b)\sigma_x(n)l\left(\frac{m}{n}\right)^x=kb^x.
\end{equation*}
We know that there exists a positive integer $d^x$ such that $d^x$ divides $\sigma_x(n)\left(\frac{m}{n}\right)^x$ and  $(d^x,k)=1.$ By Properties \ref{Property1} and \ref{Property2}, $d$ divides $b$. From Proposition \ref{Proposition1} and Lemma \ref{Lemma1},  
\begin{align*}
I(x,b)I(x,n) > I(x,d)I(x,n),
\end{align*}
implying
\begin{align*}
I(x,d)\geq  \frac{\sigma_x\left(p_j^{k_j+1}\right)}{\sigma_x\left(p_j^{k_j+1}\right)-1} = \frac{\sigma_x(p_jn)}{p_j^x\sigma_x(n)}
\end{align*}
for some positive integer $1\leq j\leq s$. Hence, 
\begin{align*}
I(x,a)= \frac{k}{lm^x}> I(x,d)I(x,n)\geq \left(\frac{\sigma_x(p_jn)}{p_j^x\sigma_x(n)}\right)\left(\frac{\sigma_x(n)}{n^x}\right)=\frac{\sigma_x(p_jn)}{(p_jn)^x}=I(x,p_jn).
\end{align*}
Therefore, we have a contradiction and $\frac{k}{lm^x}$ is an \xth abundancy outlaw. 
\end{proof}
Through these theorems, we have located certain rationals greater than one that fail to be in the image of the function $I(x,n)$. The next question we consider is when rationals greater than one are the abundancy index of at least one positive integer.

\section{\xth Abundancy Indices}
In this section, we observe rationals greater than one that fall into the image of the function $I(x,n)$. Our first proposition looks at abundancy outlaws that are \xth abundancy indices.
\begin{prop}
If $p$ is prime and  $x>1$, then $I(x,p)$ is an abundancy outlaw. 
\end{prop}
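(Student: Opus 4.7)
The plan is to recognize that $I(x,p)=\sigma_x(p)/p^x=(p^x+1)/p^x$, and to apply the classical result Theorem \ref{1} (of Holdener--Stanton) with the substitutions $m\mapsto p^x$ and $k\mapsto p^x+1$. The proposition is about abundancy outlaws in the original sense (the $x=1$ case of an $x^{\text{th}}$ abundancy outlaw), so Theorem \ref{1} is directly on point, and nothing deeper is needed.

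First I would compute $I(x,p)=(p^x+1)/p^x$ and observe that this fraction is automatically in lowest terms, since $\gcd(p^x+1,p^x)=1$. Next I would verify the three hypotheses of Theorem \ref{1}: (i) coprimality $(p^x+1,p^x)=1$ is immediate; (ii) the lower bound $p^x<p^x+1$ is trivial; (iii) the upper bound $p^x+1<\sigma(p^x)$ must be checked. Since
\[
\sigma(p^x)-(p^x+1)=(1+p+p^2+\cdots+p^x)-p^x-1=p+p^2+\cdots+p^{x-1},
\]
the sum on the right is strictly positive precisely when $x\geq 2$, which is exactly the hypothesis $x>1$ (recall $x\in\mathbb{Z}^+$). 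Thus all three conditions of Theorem \ref{1} hold.

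Applying Theorem \ref{1} concludes that $(p^x+1)/p^x=I(x,p)$ is an abundancy outlaw, which is the statement of the proposition. There is essentially no obstacle to this argument; the only subtlety is noticing that the $x>1$ hypothesis is exactly what is needed to force $\sigma(p^x)$ strictly above $p^x+1$ (if $x=1$ one gets the well-known identity $I(1,p)=(p+1)/p=I(p)$, which of course is not an outlaw, and the proof correctly breaks down there because $\sigma(p)=p+1$). So the write-up amounts to verifying the three bullet conditions and citing Theorem \ref{1}.
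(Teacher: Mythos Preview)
Your proposal is correct and essentially identical to the paper's own proof: both compute $I(x,p)=(p^x+1)/p^x$, verify $p^x<p^x+1<\sigma(p^x)=1+p+\cdots+p^x$ (using $x>1$ for the strict upper inequality), and conclude via the outlaw criterion. The only cosmetic difference is that the paper cites its generalized Theorem~\ref{Theorem1} (implicitly specialized to the case $x=1$) rather than the original Theorem~\ref{1}; the substance is the same.
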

In particular, $I(x,p)$ is an abundancy outlaw but is an \xth abundancy index when $p$ is prime. 
\begin{proof}
Let $p$ be prime and $x >1$, then
\begin{align*}
I(x,p) &= \frac{\sigma_x(p)}{p^x} \\ 
&= \frac{1+p^x}{p^x}.
\end{align*} 
We note that 
\begin{align*}
p^x<1+p^x < \sigma(p^x)= \sum_{i=0}^{x} {p^{i}}.
\end{align*}
From Theorem \ref{Theorem1}, $I(x,p)$ is an abundancy outlaw. Therefore, the abundancy outlaw $I(x,p)$ is in the image of $I(x,n)$ where $x> 1$.
\end{proof}
The next theorem and corollary are generalizations of Holdener's and Czarnecki's work \cite{Czarnecki}. They allow us to determine whether certain rationals greater than one are the \xth abundancy index of at least one positive integer.
\c*
\begin{proof}
Let $\frac{a}{cb^x}$ be a fraction greater than one in simplest terms. Suppose $\frac{a}{cb^x}=I(x,n)$ for some positive integer $n$ and $cb^x$ has a divisor $d^x=\prod_{i=1}^{s} p_i^{xk_i}$ such that $I(x,p_id)>\frac{a}{cb^x}$ for all $1 \leq i \leq s$. Suppose further that $I(x,n)=\frac{a}{cb^x}$ for some positive integer $n$, then
\begin{equation*}
cb^x\sigma_x(n) = an^x. 
\end{equation*}
From Property \ref{Property1} and our initial assumption, $d^x$ divides $n^x$. Using Property \ref{Property2}, $d$ divides $n$, hence, $n=md$ for some integer $m$. We return to our initial assumption, $I(x,p_id)>\frac{a}{cb^x}$ for all $1 \leq i \leq s$, which implies
\begin{align*}
\frac{a}{cb^x} = I(x,n)<\frac{\sigma_x(p_in)}{(p_in)^x} 
\end{align*}
and 
\begin{equation*}
(p_in)^x\sigma_x(n) <n^x\sigma_x(p_in).
\end{equation*}
This gives us $p^{x(k_i+1)}$ does not divide $ n^x$ for all $1\leq i\leq s$. Using Property \ref{Property2}, $p^{k_i+1}$ does not divide $n$ for all $1\leq i\leq s$, it follows that $(m,d)=1$. Since $I(x,n)$ is multiplicative, 
\begin{align*}
& I\left(x,n\right) = I\left(x,md\right) = I\left(x,m\right)I\left(x,d\right)=\frac{a}{cb^x}.
\end{align*}
This implies 
\begin{equation*}
\frac{\sigma_x(m)}{m^x}\frac{\sigma_x(d)}{d^x}=\frac{a}{cb^x} 
\end{equation*}
and 
\begin{equation*}
\frac{\sigma_x(m)}{m^x}=\frac{d^x}{\sigma_x(d)}\frac{a}{cb^x}, 
\end{equation*}
giving us $I(x,m) = \frac{d^x}{\sigma_x(d)}\frac{a}{cb^x}$. Therefore, $\frac{d^x}{\sigma_x(d)}\frac{a}{cb^x}$ is an \xth abundancy index.
\end{proof}
\begin{cor}
Let $m, n, t$ be positive integers. If $\frac{\sigma_x(mn)+\sigma_x(m)t}{(mn)^x}$ is a fraction in simplest terms with $m^x= \prod_{i=1}^{s} p_i^{xk_i}$ and $I(x,p_im)>\frac{\sigma_x(mn)+\sigma_x(m)t}{(mn)^x}$ for all $1\leq i \leq s$, then $\frac{\sigma_x(n)+t}{n^x}$  is an \xth abundancy index if $\frac{\sigma_x(mn)+\sigma_x(m)t}{(mn)^x}$ is an \xth abundancy index. 
\end{cor}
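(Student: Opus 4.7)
The plan is to derive this as a direct application of the preceding theorem, with no new ideas required beyond recognising the correct substitution. I would set $a = \sigma_x(mn)+\sigma_x(m)t$, $cb^x = (mn)^x$, and $d = m$, so that $d^x = m^x = \prod_{i=1}^{s} p_i^{xk_i}$. Then I would verify each hypothesis of the theorem against the hypotheses of the corollary in turn: the fraction $a/(cb^x) = (\sigma_x(mn)+\sigma_x(m)t)/(mn)^x$ is greater than one (since $\sigma_x(m)t > 0$), it is in simplest terms by assumption, it is an \xth abundancy index by assumption, the divisor condition $d^x = m^x \mid (mn)^x = cb^x$ is immediate, and the remaining requirement $I(x,p_i d) > a/(cb^x)$ is precisely the hypothesis $I(x,p_i m) > \frac{\sigma_x(mn)+\sigma_x(m)t}{(mn)^x}$ for all $1 \leq i \leq s$.

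Having matched all hypotheses, the theorem concludes that $\frac{d^x}{\sigma_x(d)} \cdot \frac{a}{cb^x}$ is an \xth abundancy index. Substituting back and cancelling the $m^x$ in the numerator against the $m^x$ factor inside $(mn)^x$ gives
\begin{equation*}
\frac{m^x}{\sigma_x(m)} \cdot \frac{\sigma_x(mn)+\sigma_x(m)t}{(mn)^x} = \frac{\sigma_x(mn) + \sigma_x(m)t}{\sigma_x(m)\, n^x}.
\end{equation*}
The closing step is the simplification
\begin{equation*}
\frac{\sigma_x(m)\sigma_x(n) + \sigma_x(m)t}{\sigma_x(m)\, n^x} = \frac{\sigma_x(n)+t}{n^x},
\end{equation*}
which delivers the desired conclusion that $\frac{\sigma_x(n)+t}{n^x}$ is an \xth abundancy index.

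The only real obstacle is the last simplification: it invokes $\sigma_x(mn) = \sigma_x(m)\sigma_x(n)$, which is the multiplicativity of $\sigma_x$ and requires $\gcd(m,n) = 1$. This coprimality is not stated explicitly in the corollary, but it mirrors the situation in the original Holdener--Czarnecki corollary being generalised and is the natural hypothesis under which the conclusion makes sense; I would flag it at the start of the proof. Everything else is a direct quote of the theorem's conclusion followed by elementary algebra, so no further technical work is needed.
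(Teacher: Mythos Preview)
Your black-box application of the preceding theorem is valid and is in fact tidier than the paper's proof, which essentially re-runs the theorem's argument inline (writing $a=bmn$, separating off $I(x,m)$, etc.) rather than citing the theorem directly. Both routes arrive at the same final simplification.

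The one substantive point to correct is your handling of $\gcd(m,n)=1$. You treat it as a missing hypothesis to be ``flagged at the start,'' but it is actually a consequence of the stated hypotheses, and the paper derives it rather than assumes it. From $I(x,p_im) > \dfrac{\sigma_x(mn)+\sigma_x(m)t}{(mn)^x} > \dfrac{\sigma_x(mn)}{(mn)^x} = I(x,mn)$ together with Proposition~\ref{Proposition1}, one sees that $p_im$ cannot divide $mn$; hence $p_i\nmid n$ for every prime $p_i$ dividing $m$, and $(m,n)=1$ follows. Insert this short argument in place of your disclaimer and the proof is complete.
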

\begin{proof}
Let $m, n, t$ be positive integers. Suppose $\frac{\sigma_x(mn)+\sigma_x(m)t}{(mn)^x}$ is a fraction in simplest terms with $m^x= \prod_{i=1}^{s} p_i^{xk_i}$ and $I(x,p_im)>\frac{\sigma_x(mn)+\sigma_x(m)t}{(mn)^x}$ for all $1\leq i \leq s$. Suppose further that $I(x,a)=\frac{\sigma_x(mn)+\sigma_x(m)t}{(mn)^x}$ for some positive integer $a$, then
\begin{equation*}
(mn)^x\sigma_x(a) = a^x\sigma_x(mn)+\sigma_x(m)t.
\end{equation*}
Using Properties \ref{Property1} and \ref{Property2}, $mn$ divides $a$, which gives us $a=bmn$ for some integer $b$. From our initial assumption, $m^x= \prod_{i=1}^{s} p_i^{xk_i}$ and $I(x,p_im)>\frac{\sigma_x(mn)+\sigma_x(m)t}{(mn)^x}$ for all $1\leq i \leq s$, it follows that $(m,n)=1$. Hence
\begin{align*}
I\left(x,a\right) = I\left(x,bmn\right) = I\left(x,m\right)I\left(x,bn\right)=\frac{\sigma_x(mn)+\sigma_x(m)t}{(mn)^x}.
\end{align*}
Since $\sigma_x$ is multiplicative, can rewrite the equation as
\begin{align*}
\frac{\sigma_x(mn)+\sigma_x(m)t}{(mn)^x} &= \frac{\sigma_x(m)\sigma_x(n)+\sigma_x(m)t}{(mn)^x} \\
&= \frac{\sigma_x(m)(\sigma_x(n)+t)}{(mn)^x} \\ 
&= I(x,m)\frac{\sigma_x(n)+t}{n^x}.
\end{align*}
We now have
\begin{align*}
I\left(x,m\right)I\left(x,bn\right) = I\left(x,m\right)\frac{\sigma_x(n)+t}{n^x} 
\end{align*}
which implies
\begin{equation*}
I\left(x,bn\right) = \frac{\sigma_x(n)+t}{n^x}.
\end{equation*}
Therefore, $\frac{\sigma_x(n)+t}{n^x}$ is an \xth abundancy index. 
\end{proof}

\section{Acknowledgements}
The author would like to dedicate this paper to his friend and first research mentor at Emory University, Dr.~Paul Bruno, Case Western Reserve University ('16). In addition, he would also like to thank Dr.~Mark Norfleet, Jelisa Tan, and the anonymous reviewer for research and writing advice. Finally, he would like to thank the Emory Department of Mathematics and Dr.~David Zureick-Brown for sponsoring his research. This paper was inspired by \cite{Holdener}.

\bibliography{Generalizing}{}
\bibliographystyle{plain}

\end{document}